\definecolor{refkey}{rgb}{0,0,1}
\definecolor{labelkey}{rgb}{0,0,1}
\newtheorem{lemma}{Lemma}[section]
\newtheorem{theorem}[lemma]{Theorem}
\newtheorem{corollary}[lemma]{Corollary}
\newtheorem{proposition}[lemma]{Proposition}
\newtheorem{algorithm}[lemma]{Algorithm}
\theoremstyle{remark}
\newtheorem{remark}[lemma]{Remark}
\newtheorem{example}[lemma]{Example}
\newcommand{\genericfact}{(a_1\,b_1)\cdots(a_n\,b_n)}
\newcommand{\diff}[1]{\mathrm{diff}(#1)}
\newcommand{\switch}{\gamma}
\newcommand{\map}[3]{{#1}:{#2}\rightarrow{#3}}
\newcommand{\depthGS}{D}
\newcommand{\len}[1]{\ell(#1)}
\newcommand{\supp}[1]{\mathrm{supp}(#1)}
\newcommand{\ninv}{\mathrm{coinv}}
\newcommand{\pinv}{\mathrm{pinv}}
\newcommand{\pninv}{\mathrm{copinv}}
\newcommand{\bninv}{\mathrm{cobinv}}
\newcommand{\bnce}{\mathrm{bounce}}
\newcommand{\inv}{\mathrm{inv}}
\newcommand{\binv}{\mathrm{binv}}
\newcommand{\T}{\mathcal{T}}
\newcommand{\N}{\mathbb{N}}
\newcommand{\F}{\mathcal{F}}
\newcommand{\Sym}[1]{\mathfrak{S}_{#1}}
\newcommand{\M}{\mathcal{M}}
\newcommand{\area}{\mathrm{area}}
\newcommand{\p}{\mathcal{P}}
\newcommand{\tdep}{\mathrm{depth}}
\newcommand{\fact}{f}
\newcommand{\hatfset}{\hat{\F}}
\newcommand{\hatffunc}{\hat{F}}
\newcommand{\jump}[1]{\mathrm{jump}(#1)}
\newcommand{\cojump}[1]{\mathrm{cojump}(#1)}
\newcommand{\attentionpreet}[1]{{\bf \color{red} #1}}
\newcommand{\ignore}[1]{}
\newcommand{\s}{\sigma}
\renewcommand{\a}{\alpha}
\renewcommand{\b}{\beta}
\renewcommand{\t}{\tau}
\newcommand{\pid}{\iota}
\def\blfootnote{\gdef\@thefnmark{}\@footnotetext}
\newcommand{\cycles}[1]{\mathfrak{C}_{[#1]}}
\newcommand{\AL}{\mathrm{area}_{L}}
\newcommand{\AU}{\mathrm{area}_{U}}
\newcommand{\Section}[1]{Section~\ref{sec:#1}}
\author{John Irving}
\address{Saint Mary's University, Halifax, N.S., Canada}
\email{john.irving@smu.ca}
\author{Amarpreet Rattan}
\address{Department of Mathematics, Simon Fraser University,
Burnaby, BC, Canada}
\email{rattan@sfu.ca}
\title[Trees, parking functions and factorizations of full cycles]
{Trees, parking functions and factorizations of full cycles}
\begin{document}
\blfootnote{Preliminary versions of the main results herein were announced in the extended abstract \cite{irfpsac}.}

\begin{abstract}

Parking functions of length $n$ are well known to be in correspondence
with both labelled trees on $n+1$ vertices and factorizations of the
full cycle $\s_n=(0\,1\,\cdots\,n)$ into $n$ transpositions.  In fact,
these correspondences can be refined: Kreweras equated the area
enumerator of parking functions with the inversion enumerator of  labelled trees, while an elegant bijection of Stanley maps the area of parking functions to a natural statistic on factorizations of $\s_n$.   We extend these relationships in two principal ways. First, we introduce a bivariate refinement of the inversion enumerator of trees and show  that it matches a similarly refined enumerator for factorizations.     Secondly, we characterize all full cycles $\s$ such that Stanley's function remains a bijection when the canonical cycle $\s_n$ is replaced by $\s$.    We also  exhibit a connection between our refined inversion enumerator and  Haglund's bounce statistic on parking functions.
\end{abstract}  

\maketitle
\section{Introduction}
\label{sec:introduction}

This article concerns the interplay between  trees on vertices $\{0,1,\ldots,n\}$,  parking functions of length $n$,  and factorizations of the full cycle $(0\,1\,\cdots\,n)$  into $n$ transpositions. To put our work in proper context we begin with a brief review of some well-known relationships among these  ubiquitous objects. Novel content commences in \Section{summary}, where we present an overview of our results.

\subsection{Notation}
For nonnegative integers $m,n\in \N$ we let $[n]:=\{0,1,\ldots,n\}$ and  $[m,n]  := \{ i \in \N \,:\, m \leq i \leq n\}$.   We write $\Sym{[n]}$ for the symmetric group  on $[n]$, and typically express its elements in cycle notation with fixed points suppressed, using  $\iota$ to denote the identity.  We caution the reader that we multiply permutations from \emph{left to right}, that is $\pi_1\pi_2\,:\, i \mapsto \pi_2(\pi_1(i))$.  Finally, the \emph{support} of a cycle $C=(c_1\,\cdots\,c_k)$ is denoted $\supp{C}:=\{c_1,\ldots,c_k\}$.

\subsection{Labelled Trees}

Let $\T_n$ be the set of all $(n+1)^{n-1}$  trees on vertex set $[n]$.  We regard each $T
\in \T_n$ as being rooted at 0, and say for distinct vertices $i$ and $j$ that $j$ is   a
\emph{descendant} of  $i$ in $T$ if $i$ lies on the unique path from $j$ to the root.   If
$j$ is a descendant of $i$, with $i > j$, then the pair $(i,j)$ is called an  \emph{inversion} in $T$.

Let $\inv(T)$ denote the number of inversions in $T$. The \emph{inversion enumerator} of $\T_n$  is defined by 
$$
	I_n(q) := \sum_{T \in \T_n} q^{\inv(T)}.
$$
These well-studied polynomials  were first introduced in~\cite{mallowsriordan}, where it was  shown that they satisfy the recursion
\begin{equation}
\label{eq:mallowsrecursion}
	I_{n+1}(q) = \sum_{i=0}^n \binom{n}{i}  (1+q+q^2+\cdots+q^i) I_i(q)  I_{n-i}(q).
\end{equation}
Kreweras~\cite{kreweras}  used this relation to establish
several interesting combinatorial properties of the sequence $\{I_n(q)\}$, including a link with parking functions to be described shortly. See also~\cite{beissinger,gesselwang} for  connections to the Tutte polynomial of the complete graph.

\subsection{Parking Functions}

A \emph{parking function} is a sequence $(a_1,\ldots,a_n) \in \N^n$ whose
non-decreasing rearrangement $(a_1', \ldots, a_n')$ satisfies $a_i' \leq i-1$
for all $i$.  Let $\p_n$ be the set of  parking functions of length $n$  and
let $\M_n$ be the set of  their complements $(n-a_1,\ldots,n-a_n)$, which are known as \emph{major sequences}.    Equivalently,
$(b_1,\ldots,b_n) \in \N^n$ is a major sequence if its  non-decreasing arrangement $(b_1',\ldots,b_n')$ satisfies $i \leq b_i' \leq n$ for all $i$.

Parking functions were first studied explicitly by Konheim and
Weiss~\cite{konheim}, who proved analytically that $|\p_n|=(n+1)^{n-1}$.
The equality $|\p_n|=|\T_n|$  anticipates  correspondences between parking functions and trees, and indeed   such bijections were found by various authors. The connection with trees was later refined by Kreweras as follows:
\begin{theorem}[\cite{kreweras}]\label{thm:kreweras}
For $n \geq 0$, we have
	\begin{equation*}
\label{eq:kreweras}
	I_n(q)
	= 
	\sum_{(a_1,\ldots,a_n) \in \p_n} q^{\binom{n}{2}-(a_1+ \cdots + a_n)}.
\end{equation*}
\end{theorem}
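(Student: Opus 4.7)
The plan is to verify that the generating polynomial
\[
P_n(q) := \sum_{(a_1, \ldots, a_n) \in \p_n} q^{\binom{n}{2} - (a_1 + \cdots + a_n)}
\]
satisfies the Mallows--Riordan recursion \eqref{eq:mallowsrecursion} with the same base case $P_0(q) = 1 = I_0(q)$ (the empty parking function contributing $q^0$). Since that recursion uniquely determines the sequence $\{I_n(q)\}$, induction on $n$ then gives $P_n(q) = I_n(q)$, which is the claimed identity.

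To obtain the recursion for $P_n(q)$, I would produce a combinatorial decomposition of $\p_{n+1}$ indexed by a split size $i \in \{0, 1, \ldots, n\}$, in which each parking function is represented uniquely as a quadruple consisting of a sub-parking function in $\p_i$, a sub-parking function in $\p_{n-i}$, a choice of which $i$ among the $n$ non-distinguished positions carry the first piece (contributing $\binom{n}{i}$), and a \emph{shift} parameter taking the $i+1$ values $0, 1, \ldots, i$. A natural way to arrange this is to work with the sorted form $a'_1 \le \cdots \le a'_{n+1}$ and split it at the position $k$ where the Dyck-type lattice path encoded by the sorted sequence (east steps at heights $a'_1, \ldots, a'_{n+1}$, staying weakly below the diagonal) first returns to the diagonal. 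The two halves, after an appropriate translation, are parking functions of sizes $i$ and $n-i$, and the shift records how the suffix is offset relative to the prefix. Tracking the statistic $\binom{n+1}{2} - \sum_j a_j$ through this decomposition should yield a factor $(1 + q + \cdots + q^i)$ from the shift together with the product $P_i(q)\,P_{n-i}(q)$ from the two halves, reproducing \eqref{eq:mallowsrecursion} exactly.

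The main obstacle is the combinatorial and arithmetic bookkeeping. One must verify that the split is well defined, that it is invertible (i.e.\ the decomposition map is a bijection), and that the statistic $\binom{n+1}{2} - \sum a_j$ decomposes additively as the sum of the corresponding statistics on the two halves plus the shift parameter. The delicate point is ensuring that the shift contributes the exponents $0, 1, 2, \ldots, i$ with multiplicity one, producing the clean factor $(1 + q + \cdots + q^i)$ rather than some dilated version: the normalization of the shift must be chosen so that incrementing it advances the statistic by exactly $1$. Getting this normalization right is essentially the whole content of the proof, and it is for this reason that Kreweras's original argument proceeds via the recursion rather than by directly manipulating sums over $\p_n$.
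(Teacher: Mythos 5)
Your overall strategy --- show that $P_n(q):=\sum_{p\in\p_n}q^{\binom{n}{2}-\sum_i a_i}$ satisfies the Mallows--Riordan recursion \eqref{eq:mallowsrecursion} and conclude by induction from the common base case --- is exactly the route the paper attributes to Kreweras (the paper gives no proof of its own, only the citation and that remark). The problem is that your proposal never carries out the one step that constitutes the proof, and the decomposition you sketch does not have the right shape to deliver it. If you cut the sorted sequence at the first return of its lattice path to the diagonal, say at $x=k$, the block before the cut is not, ``after an appropriate translation,'' an arbitrary parking function: it is a \emph{prime} parking function (its sorted entries satisfy $a'_1=0$ and $a'_j\le j-2$ for $2\le j\le k$), and prime parking functions of length $k$ are counted by $(k-1)^{k-1}$, which does not match $(i+1)\,|\p_i|=k\cdot k^{k-2}=k^{k-1}$ for $i=k-1$. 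Moreover the labels entering the first block are forced to be the $k$ cars with values at most $k-1$, chosen from all $n+1$ cars, so the label factor attached to this cut is $\binom{n+1}{k}$, not $\binom{n}{i}$. The first-return split therefore yields a recursion of the form $P_{n+1}(q)=\sum_k\binom{n+1}{k}\,\Pi_k(q)\,q^{*}\,P_{n+1-k}(q)$, where $\Pi_k$ enumerates prime parking functions --- a different identity from \eqref{eq:mallowsrecursion} --- and no choice of ``shift normalization'' can repair this, since the term-by-term identification already fails at $q=1$. Decompositions that do realize \eqref{eq:mallowsrecursion} on parking functions (Kreweras' own argument via major sequences, or Abel-identity style arguments) distinguish one special car and split the remaining $n$ cars into two groups, rather than cutting the sorted path at a first return.

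Beyond this structural mismatch, you explicitly defer the entire verification (``should yield,'' ``the main obstacle is the bookkeeping,'' ``getting this normalization right is essentially the whole content of the proof''), so what is offered is a plan rather than a proof: the well-definedness and invertibility of the claimed quadruple decomposition and the additive behaviour of the statistic $\binom{n+1}{2}-\sum_j a_j$ under it are precisely the content of the theorem, and they are neither established nor, as currently set up, establishable.
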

Kreweras proved Theorem~\ref{thm:kreweras} by showing  the polynomials on both sides of the identity  satisfy recursion~\eqref{eq:mallowsrecursion}. Several bijective proofs  have since been found, \emph{e.g.}~\cite{shin}.  We direct the reader to Yan's comprehensive survey~\cite{yansurvey}  for further details and references.

The  quantity $\binom{n}{2}-\sum_i a_i$ appearing in Theorem~\ref{thm:kreweras}
is called the \emph{area} of  the parking function $p=(a_1,\ldots,a_n)$. This terminology stems from an alternative view of parking functions
as labelled {Dyck paths}; that is, lattice paths from $(0,0)$ to $(n,n)$ that
remain weakly below $y=x$.   From the non-decreasing sequence $a_1' \leq \cdots
\leq a_n'$ corresponding to $p$, one draws a path $P$ whose $j$-th horizontal
step is at height $a_j'$. All horizontal steps at height $j$ are then labelled
with  $\{i : a_i = j\}$, in decreasing order left-to-right.     Evidently
$\binom{n}{2}-\sum a_i$ is the number of whole squares between $P$ and the diagonal $y=x$. See Figure~\ref{fig:factarea} (black). 
\begin{figure}[t]
	\centering
	\includegraphics[height=5cm]{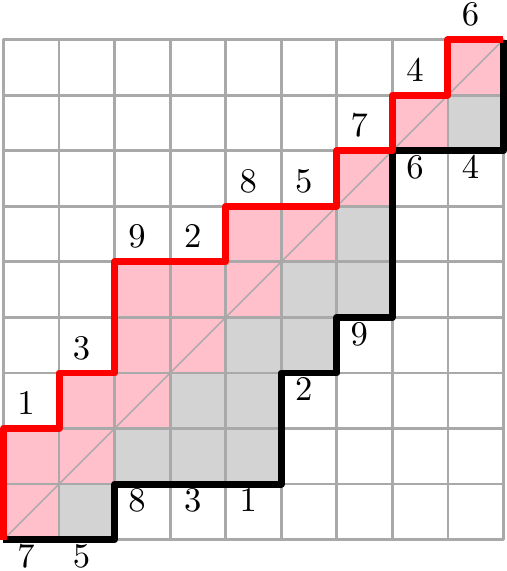}
	\caption{In black, the labelled Dyck path corresponding to
		$(1,3,1,7,0,7,0,1,4) \in \p_9$, with  shaded area
		$\binom{9}{2}-24=12$.  In red, the path corresponding
		to $(2,5,3,8,6,9,7,6,5) \in \M_n$, with shaded area
		$51-\binom{9}{2}=15$. 
	} 
	\label{fig:factarea}
\end{figure}

Similarly,  major sequences $q=(b_1,\ldots,b_n)$ correspond with labelled  paths remaining  weakly above $y=x$. The area of $q$ is defined to be the number of  squares under its path that lie \emph{on or above} the diagonal, \emph{i.e.} $\area(q):=\sum_i b_i - \binom{n}{2}$.    See Figure~\ref{fig:factarea} (red).

\subsection{Minimal Factorizations of Full Cycles}

Let $\cycles{n}$ denote the set of all full cycles in $\Sym{[n]}$. It is easy to see that any $\s \in \cycles{n}$ can be expressed as a product of $n$ transpositions and no fewer.  Accordingly, a sequence $(\t_1, \ldots,\t_n)$ of transpositions  satisfying  $\t_1\t_2\cdots\t_n=\s$ is called a \emph{minimal factorization} of $\s$.    Let $\F_\s$ be the set of all such factorizations.

We shall be particularly interested in factorizations of the canonical full cycle  $\s_n := (0\,1\,\cdots\,n)$.  For simplicity we write $\F_n$ in place of $\F_{\s_n}$. It is also convenient to express factorizations as products rather than tuples; for instance,
\begin{align*}
	\F_{1} &= \{(0\; 1)\},\\
	\F_{2} &= \{(0\; 1) (0 \; 2), (0\; 2) (1\; 2), (1\; 2)
	(0\; 1)\}.
\end{align*}  
We remind the reader that permutations are multiplied from left to right.

Minimal factorizations of $\s_n$ have long been known to be closely related to labelled trees.  The famous identity $|\F_n|=(n+1)^{n-1}$  dates back at least to Hurwitz but is often credited to D\'enes~\cite{denes}, who offered an elegant proof via indirect counting.  Direct bijections between $\F_n$ and $\T_n$  came later.  The simplest of these, due to Moszkowski~\cite{mosz}, has been rediscovered in different guises by a number of authors.

There is  a strikingly simple connection between factorizations and parking functions.
Consider the mappings $\map{L}{\F_n}{\N^n}$ and $\map{U}{\F_n}{\N^n}$  that send factorizations $f \in \F_n$ to their \emph{lower} and \emph{upper sequences}, respectively; that is,
\begin{align}
\label{eq:biane}
	(a_1\,b_1) \cdots (a_n\,b_n)
	\quad
	&\xmapsto{~L~}
	\quad
	(a_1,\ldots,a_n)  \\
	(a_1\,b_1) \cdots (a_n\,b_n)
	\quad
	&\xmapsto{~U~}	\quad
	(b_1,\ldots,b_n). \notag
\end{align}
Here, and throughout, we assume that indeterminate transpositions $(a\,b)$ are written such that $a < b$.   Then we have:

\begin{theorem}[{\cite{stanleyparking,biane}}]\label{thm:bianestanley}
The functions $L$  and $U$  map $\F_n$ bijectively into $\p_n$ and $\M_n$, respectively.
\end{theorem}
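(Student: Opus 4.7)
The plan is to establish the theorem in three stages: image containment, cardinality matching, and injectivity.

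\emph{Image containment.} I would first verify that $L(f) \in \p_n$ and $U(f) \in \M_n$ for every $f \in \F_n$. A standard genus argument shows that the edge multiset $\{\{a_i,b_i\} : 1 \le i \le n\}$ of any minimal factorization of an $(n+1)$-cycle is a spanning tree $T$ on $[n]$. For fixed $k \in [0,n-1]$, let $c_k$ and $c'_k$ denote the number of connected components of $T|_{[0,k]}$ and $T|_{[k+1,n]}$. A direct Euler-type count on the cut gives
\[
	|\{i : a_i \le k\}| \;=\; (k+1 - c_k) + (c_k + c'_k - 1) \;=\; k + c'_k \;\ge\; k+1,
\]
which is precisely the parking inequality for $L(f)$. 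The complementary cut at $[k,n]$ yields $|\{i : b_i \ge k\}| = n-k+c_{k-1} \ge n-k+1$, the major-sequence inequality for $U(f)$.

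\emph{Cardinality and reduction to induction.} By the classical identities $|\F_n| = |\p_n| = |\M_n| = (n+1)^{n-1}$ (Hurwitz--D\'enes, Konheim--Weiss), it suffices to prove injectivity of $L$; the injectivity of $U$ then follows from the involution $f \mapsto \hat f$ on $\F_n$ that reverses factor order and conjugates by $i \mapsto n-i$ (one checks that $\hat f \in \F_n$ and that $L(\hat f)$ is the reversal of the componentwise complement of $U(f)$). To prove $L$ injective I would argue by induction on $n$, peeling off the last transposition. A direct computation gives
\[
	\s_n \cdot (a\,b) \;=\; (0\,1\,\cdots\,a{-}1\,b\,b{+}1\,\cdots\,n)\,(a\,a{+}1\,\cdots\,b{-}1),\qquad 0 \le a < b \le n,
\]
so the prefix $\t_1 \cdots \t_{n-1}$ must decompose as a pair of minimal factorizations of the two disjoint cycles on the complementary arcs $A = [0, a_n-1] \cup [b_n, n]$ and $B = [a_n, b_n-1]$. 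Because the tree of $f$ is connected through the single edge $\{a_n,b_n\}$, every other transposition is supported entirely in $A$ or in $B$, and its placement is detectable from the lower sequence alone, since $a_i < b_i$. After relabelling each arc canonically, each sub-factorization lies in a smaller $\F_k$ and is determined by its lower subsequence via the inductive hypothesis.

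\emph{The main obstacle.} The principal remaining difficulty is to recover the pair $(a_n, b_n)$ from $L(f)$ alone. While $a_n$ is simply the final entry, $b_n$ must be identified as the unique $b > a_n$ for which the sub-sequences of $(a_1, \ldots, a_{n-1})$ falling in $[a_n, b-1]$ and in $[0, a_n-1] \cup [b, n-1]$ form parking functions of lengths $b - a_n - 1$ and $n - b + a_n$ respectively, after relabelling. I would prove this uniqueness by analyzing the defect function $D(b) := |\{i \le n : a_n \le a_i < b\}| - (b - a_n)$, showing that $D$ is non-negative and first vanishes at $b = b_n$---equivalently, that the tree inequality $c'_k \ge 1$ from the image-containment step becomes an equality at precisely the cut determined by $b_n$. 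Once this uniqueness is in hand the induction closes and the bijectivity of both $L$ and $U$ follows.
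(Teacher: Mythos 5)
Your plan is correct, and its one genuinely load-bearing claim checks out: for $a_n<b<b_n$ the block of factors supported on $[a_n,b_n-1]$ is itself a minimal factorization of a contiguous cycle, so its (shifted) lower sequence is a parking function, which forces $D(b)\geq 1$, while counting that block gives $D(b_n)=0$; hence $b_n$ is the first zero of $D$ and is recoverable from $L(f)$, and the induction closes. This is, however, a different route from the paper's. The paper does not prove Theorem~\ref{thm:bianestanley} directly (it cites Biane and Stanley), but it proves the generalization Theorem~\ref{thm:unimodal} in Section~\ref{sec:unimodal}: containment $L(\F_\s)\subseteq\p_n$, $U(\F_\s)\subseteq\M_n$ by essentially your forest/cut count (Proposition~\ref{prop:alwayspark} via Lemma~\ref{lem:pfcharacterization}), then \emph{surjectivity} of $L_\s$ via an explicit graphical construction of a preimage (Algorithm~\ref{algo:unimodal}), then bijectivity from $|\F_\s|=|\p_n|=(n+1)^{n-1}$, with the same $i\mapsto n-i$ involution handling $U$. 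You instead prove \emph{injectivity} by induction, peeling off the last factor, using the splitting $\s_n(a_n\,b_n)$ into two contiguous cycles together with the minimality criterion (Lemma~\ref{lem:minimal1}) to separate the prefix into two sub-factorizations whose membership is read off from the lower entries alone. What each buys: the paper's algorithm produces an explicit inverse map, works uniformly for all unimodal cycles, and yields the lower-to-upper path description; your induction is shorter and self-contained for $\s_n$, and your first-zero criterion for $b_n$ is exactly the noncrossing/matching principle behind the arch diagrams of Section~\ref{sec:minimalfactorizations} (and is close in spirit to Biane's original argument), though it does not obviously extend to general unimodal $\s$. Two small imprecisions, neither fatal: $D$ need not stay non-negative past its first zero (e.g.\ $(0\,2)(0\,3)(1\,2)\in\F_3$ has $L=(0,0,1)$, $b_3=2$, but $D(3)=-1$), and the tree-language restatement of $D(b)=0$ is $c'_{b-1}=c'_{a_n-1}$ rather than $c'_{b-1}=1$; what your argument actually needs --- strict positivity on $(a_n,b_n)$ and vanishing at $b_n$ --- is precisely what the block decomposition gives.
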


The bijectivity of $\map{L}{\F_n}{\p_n}$ is proved explicitly by Biane
\cite{biane}, though the author notes his result is equivalent to an earlier correspondence
of Stanley between parking functions and maximal chains in the lattice of
noncrossing partitions~\cite[Theorem 3.1]{stanleyparking}.  The bijectivity of
$\map{U}{\F_n}{\M_n}$ follows easily from that of $L$ via the involution
on $\F_n$ that first interchanges symbols $i$ and $n-i$ and  then reverses the order of the factors.

\section{Summary of  Results}
\label{sec:summary}

The  connections   between $\T_n, \F_n$ and $\p_n$ described in \Section{introduction} can be extended in  a few different directions. We will now summarize our contributions along these lines.  Proofs and various other details of the results stated here will be deferred to later sections, as indicated.

\subsection{Factorizations and Trees}

In light of Theorem~\ref{thm:bianestanley}, it is sensible to define the \emph{lower} and \emph{upper areas} of a factorization $f = \genericfact \in \F_n$ to be the areas of  $L(f) \in \p_n$ and $U(f) \in \M_n$, respectively. We write
\begin{align*}
	\AL(f) := \tbinom{n}{2}-(a_1+\cdots+a_n)
\intertext{and}
\AU(f) := (b_1+\cdots+b_n) - \tbinom{n}{2}
\end{align*}
for these quantities and further define
\begin{equation}
\label{eq:Fdefn}
	F_n(q,t) := \sum_{f \in \F_n} q^{\AL(f)} t^{\AU(f)}.
\end{equation}

Observe that Theorems~\ref{thm:kreweras} and~\ref{thm:bianestanley} immediately yield  $F_n(q,1)=I_n(q)$.  We seek to refine this identity by equating  $F_n(q,t)$ with a natural bivariate extension of the inversion enumerator.

Let us define a \emph{coinversion} in a tree $T \in \T_n$ to be a pair of vertices $(i,j)$ such that $j$ is a descendant of $i$ and $i < j$.
Thus every pair  of distinct vertices $(i,j)$ in $T$ with $j$ a descendant of $i$ is either an inversion $(i >
j)$, or a coinversion $(i < j)$.   Let $\ninv(T)$ denote the number of coinversions in $T$ and  set
$$
	I_n(q,t) := \sum_{T \in \T_n} q^{\inv(T)} t^{\ninv(T)}.
$$
The first few values of $I_n(q,t)$ are
\begin{align}\label{eq:exampinvenum}
	\begin{split}
	I_0(q,t) &= 1 \\
	I_1(q,t) &= t \\
	I_2(q,t) &= t^2 + t^3 + t^2 q\\
	I_3(q,t) &= t^6 + 2t^5q + 2t^4q^2 + t^3	q^3 + t^5 + t^4q + t^3q^2 + 3t^4
	+ 3t^3q + t^3.
\end{split}
\end{align}
Note that the asymmetry of $I_n(q,t)$ is a consequence of the root  0 contributing $n$ coinversions for every tree $T \in \T_n$.  The polynomial $t^{-n}I_n(q,t)$
counts only non-root coinversions and is seen to be symmetric by swapping  labels $i$ and $n-i+1$, for $i \neq 0$.\footnote{This asymmetry could naturally be remedied by  instead defining $I_n(q,t)$ over rooted forests on vertices $1,2,\ldots,n$.  However, the present definition is convenient for our purposes.}

Our first result is the following $(q,t)$-refinement of $|\T_n|=|\F_n|$. The case $n=3$  is illustrated in Figure~\ref{fig:mainresult}. 
\begin{theorem}
\label{thm:mainresult}
For $n \geq 0$ we have $I_n(q,t)=F_n(q,t)$.  That is, the bi-statistics $(\inv, \ninv)$ on $\T_n$ and $(\AL, \AU)$ on $\F_n$ share the same joint distribution.  
\end{theorem}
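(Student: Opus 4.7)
My plan is to adapt Kreweras's argument from the proof of Theorem~\ref{thm:kreweras} and derive a common bivariate recursion refining~\eqref{eq:mallowsrecursion}. On the tree side, I decompose $T \in \T_{n+1}$ by isolating the subtree $T^{\star}$ rooted at the child $c$ of $0$ whose descendants contain vertex $n+1$; writing $S := V(T^{\star})$, $i+1 := |S|$, the remaining vertices together with $0$ form a tree $T^{\sharp}$ on $n-i+1$ vertices rooted at $0$. Because $\inv$ and $\ninv$ only see descendant relations and the two subtrees share only vertex $0$, the inversions split cleanly as $\inv(T) = \inv(T^{\star},c) + \inv(T^{\sharp})$, while the coinversions satisfy $\ninv(T) = \ninv(T^{\star},c) + \ninv(T^{\sharp}) + (i+1)$, the extra $(i+1)$ counting coinversions from $0$ to vertices of $S$ that are not accounted for in $T^{\sharp}$. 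To transport $T^{\star}$ to some $T' \in \T_i$, I relabel $c \mapsto 0$ and map $S \setminus \{c\}$ order-preservingly to $\{1,\ldots,i\}$; if $c$ is the $(k+1)$-st smallest element of $S$, this relabeling flips exactly the $k$ descendant pairs $(c, s_b)$ with $s_b < c$ from inversions into coinversions, yielding $\inv(T^{\star},c) = \inv(T') + k$ and $\ninv(T^{\star},c) = \ninv(T') - k$. Since $n+1 \in S$ is automatically the largest element of $S$, the rank $k$ of $c$ ranges freely over $\{0,\ldots,i\}$, and summing over $k$ together with the $\binom{n}{i}$ choices of the remaining $i$ vertices of $S$ from $\{1,\ldots,n\}$ produces
\[
I_{n+1}(q,t) = \sum_{i=0}^n \binom{n}{i}\, \widetilde{G}_i(q,t)\, I_i(q,t)\, I_{n-i}(q,t), \qquad \widetilde{G}_i(q,t) := \sum_{k=0}^i q^k\, t^{i+1-k},
\]
which specializes at $t=1$ to~\eqref{eq:mallowsrecursion}.

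On the factorization side, I would decompose $f \in \F_{n+1}$ according to its first transposition $\t_1 = (a,b)$. Multiplying out, $\t_2 \cdots \t_{n+1} = \t_1 \s_{n+1}$ is a product of two disjoint cycles whose supports are the complementary arcs $[a+1,b]$ and $[0,a]\cup[b+1,n+1]$; after relabeling within each arc one obtains a pair of minimal factorizations of smaller canonical cycles. The crux is then to track how $\AL(f)$ and $\AU(f)$ distribute across the two pieces and the prepended transposition $(a,b)$, and to verify that summing over admissible positions of $\t_1$ recovers the same weighted recursion. Together with the common base case $I_0(q,t) = F_0(q,t) = 1$, matching the two recursions proves the theorem.

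The main obstacle will be the factorization-side verification, in particular showing that the weight $\widetilde{G}_i(q,t) = \sum_{k=0}^i q^k t^{i+1-k}$ emerges naturally from the decomposition. One must understand how the cyclic asymmetry between the two arcs around $\s_{n+1}$ translates into perturbations of $\AL$ and $\AU$, and how summing over the valid choices of $(a,b)$ produces precisely the shape $\sum_k q^k t^{i+1-k}$. A purely bijective alternative would be to construct a statistic-preserving map $\F_n \to \T_n$; a modification of Moszkowski's edge-reading bijection~\cite{mosz} is the natural starting point, though small cases (e.g.\ $n=2$) already show that the raw bijection does not intertwine $(\AL,\AU)$ with $(\inv,\ninv)$, so the task would reduce to identifying the correct reordering rule on the sequence of transpositions.
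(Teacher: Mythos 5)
Your tree-side argument is sound: isolating the root-subtree containing $n+1$, recording the rank $k$ of its root $c$ inside its vertex set, and relabelling does give $I_{n+1}(q,t)=\sum_{i}\binom{n}{i}\,t(t^{i}+t^{i-1}q+\cdots+q^{i})\,I_i(q,t)I_{n-i}(q,t)$, which is precisely the recursion~\eqref{eq:recursion} that the paper derives (there via root deletion and the exponential formula). The gap is the factorization side, which you leave as a plan with an acknowledged ``main obstacle'' --- and for the decomposition you propose the obstacle is genuine, not just bookkeeping. Stripping the first factor $\t_1=(a\,b)$ from $f\in\F_{n+1}$ leaves a minimal factorization of a permutation whose two cycles are supported on $[a+1,b]$ and on $[0,a]\cup[b+1,n+1]$, and the second support is \emph{not} an interval. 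Under the order-preserving relabelling to a canonical cycle, an entry of a factor in that piece is shifted by $b-a$ or by $0$ according to which side of the gap it lies on, so the original sums $\sum_r a_r$ and $\sum_r b_r$ are not recoverable from the $(\AL,\AU)$ of the two relabelled pieces plus a function of $(a,b)$ alone: the correction counts how many lower (resp.\ upper) entries of the second piece exceed $a$, a quantity that varies among factorizations with the same area statistics. Hence the sum over that piece is not $F_{n-i}(q,t)$ times a monomial, and the weight $\sum_{k=0}^{i}q^{k}t^{i+1-k}$ will not ``emerge'' term by term; a quick sanity check is that for fixed gap size $i+1$ there are $n+1-i$ admissible pairs $(a,b)$, not $i+1$, so no assignment of one clean monomial per choice of $(a,b)$ can be expected. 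Since this half of the argument is exactly what the theorem needs, the proposal as it stands does not prove the statement.

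For comparison, the paper sidesteps this entanglement by cutting the arch diagram of $f$ along its caps (Proposition~\ref{thm:exponential}): each resulting piece is a \emph{simple} factorization occupying a set of \emph{consecutive} vertices, so every factor in a piece is shifted by one uniform offset and $\AL,\AU$ split additively, yielding the exponential formula. The reduction from simple factorizations to $\F_{n-1}$ is then done not by deleting a factor in place but by the rotation map $\phi_k$ of Proposition~\ref{thm:breakdown}, which conjugates by $\s_n$ until the factor $(0\,n)$ (whose position $k$ is pinned down by Lemma~\ref{lem:moz}) sits last and can be removed, changing $(\AL,\AU)$ by exactly $(k-1,\,n-k+1)$ and producing the factor $t(t^{n-1}+t^{n-2}q+\cdots+q^{n-1})$. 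If you want to salvage a ``remove one transposition'' scheme, the factor to remove is $(0\,n)$ after such a rotation, not $\t_1$; removing the first factor head-on forces you to control cross-terms that your sketch does not address.
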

\begin{figure}[t]
	\centering
	\includegraphics[width=\textwidth]{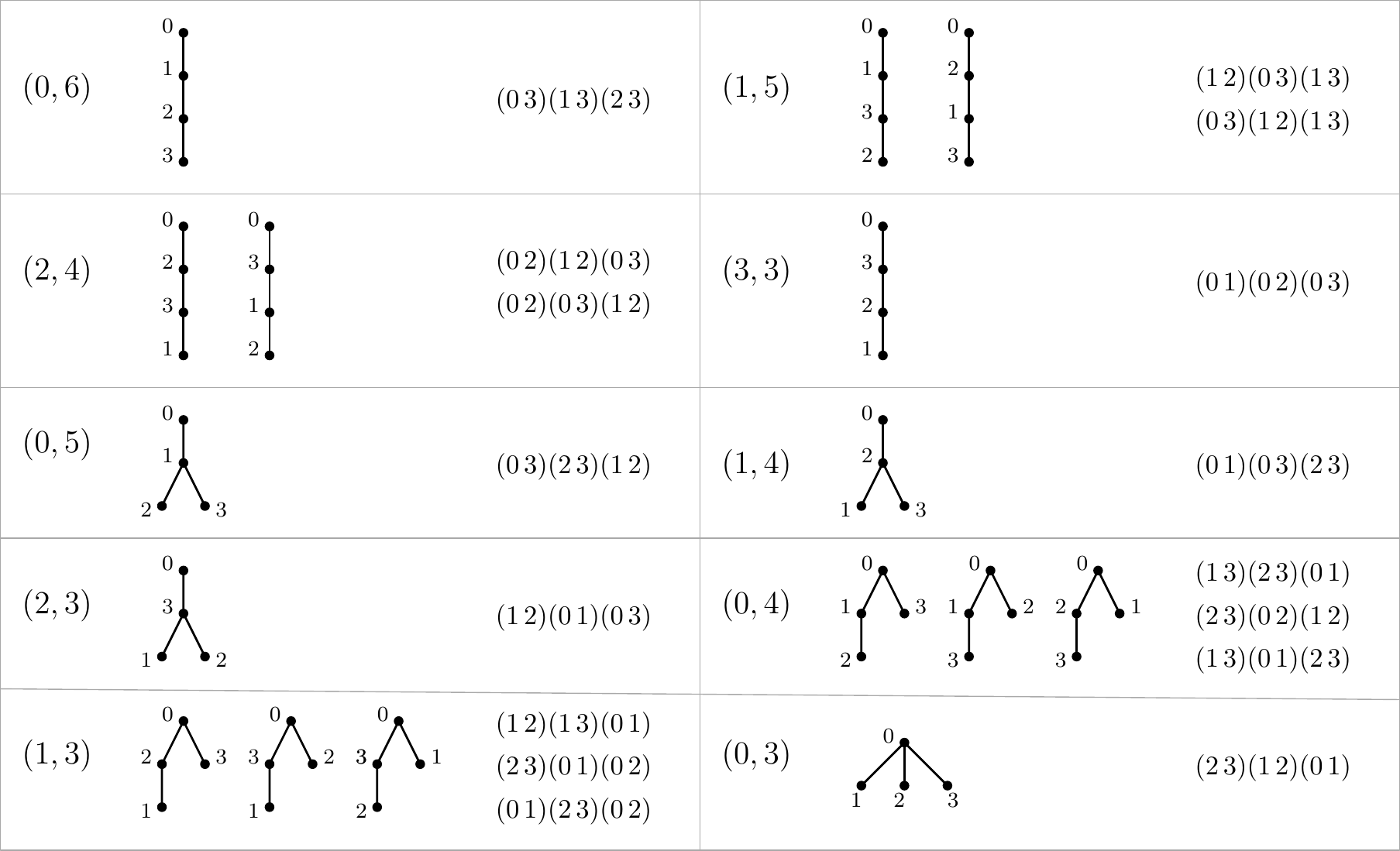}
	\caption{The sets $\T_3$ and $\F_3$ with elements grouped by common values of $(\inv,\ninv)$ and $(\AL, \AU)$.} 
	\label{fig:mainresult}
\end{figure}

Our proof of Theorem~\ref{thm:mainresult}   involves  verifying  that  $F_n(q,t)$ and
$I_n(q,t)$ both satisfy a straightforward refinement of a functional equation equivalent to 
recursion~\eqref{eq:mallowsrecursion}. Certain details of the proof  are simplified by employing  a   graphical model for factorizations.  The model will be developed in \Section{minimalfactorizations}, followed by the proof of the theorem in  Section~\ref{sec:mainthm}.  In \Section{special} we investigate the distribution of $(\AL, \AU)$ over several interesting subsets of $\F_n$.

\subsection{Factorizations and Parking Functions}
\label{sec:parkingfactorizations}

\newcommand{\Ls}{L_\s}
\newcommand{\Us}{U_\s}

For an arbitrary full cycle  $\s \in \cycles{n}$, one can define the lower and upper functions $\map{\Ls}{\F_\s}{\N^n}$ and $\map{\Us}{\F_{\s}}{\N^n}$ exactly as in~\eqref{eq:biane}. It is not difficult to show that $\Ls$ and $\Us$ always map $\F_\s$ into $\p_n$ and $\M_n$, respectively.  It is then  natural to ask for which $\s$  these mappings are bijective.  Our next result generalizes Theorem~\ref{thm:bianestanley} by  characterizing all such $\s$.

Let us say  the full cycle  $(0\,s_1\, \cdots \,s_n) $ is \emph{unimodal} if there is some $N$ such that $s_N=n$ and $0 < s_1 < s_2 < \cdots < s_N > s_{N+1}  > \cdots > s_n$. For example, $(0\,2\,3\,5\,4\,1) \in \cycles{5}$ is unimodal whereas $(0\,1\,4\,3\,5\,2)$ is not.  
Note that there are $2^{n-1}$ unimodal full cycles in $\Sym{[n]}$, since each is determined  by a subset $\{s_1,\ldots,s_{N-1}\}$ of $[1,n-1]$.  Then:

\begin{theorem}
For any $\s\in \cycles{n}$ we have $L(\F_\s)\subseteq \p_n$ and $U(\F_\s) \subseteq \M_n$.  Moreover,  the following are equivalent:
\begin{enumerate}
\item	$\s$ is unimodal.
\item	$\map{\Ls}{\F_\s}{\p_n}$ is a bijection.
\item	$\map{\Us}{\F_\s}{\M_n}$ is a bijection.
\end{enumerate}
	\label{thm:unimodal}
\end{theorem}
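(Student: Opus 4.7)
My approach is to prove the containments via a spanning-tree argument, reduce bijectivity to injectivity via a cardinality count, and then establish $(1) \Leftrightarrow (2)$ by induction on $n$ for the forward direction and a collision construction for the converse, with $(1) \Leftrightarrow (3)$ transferring via an involution.

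For the containments, I would use the classical observation that in any $f = (a_1\,b_1)\cdots(a_n\,b_n) = \s \in \F_\s$, the unordered pairs $\{a_i, b_i\}$ form a spanning tree $T_f$ on $[n]$: they comprise $n$ edges on $n+1$ vertices, and connectedness follows because $\s$ has full support (a disconnection would produce a proper $\s$-invariant subset, contradicting $\s$ being a single cycle). For each $j \in \{0,\ldots,n-1\}$, the restriction of $T_f$ to $\{j+1,\ldots,n\}$ is a subforest on $n-j$ vertices, hence has at most $n-j-1$ edges, so at least $j+1$ transpositions satisfy $\min(a_i, b_i) \leq j$. This is exactly the parking function condition for $L(f) \in \p_n$; the dual count gives $U(f) \in \M_n$. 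Since all full cycles in $\Sym{[n]}$ are conjugate, $|\F_\s|$ is constant on $\cycles{n}$ and equals $(n+1)^{n-1} = |\p_n| = |\M_n|$ by Theorem~\ref{thm:bianestanley}, so bijectivity of $L_\s$ (resp.\ $U_\s$) is equivalent to injectivity and to surjectivity. Moreover, the involution $f \mapsto f^*$ defined by applying $\gamma: i \mapsto n-i$ to each transposition and then reversing the order of the factors is a bijection $\F_\s \to \F_{\gamma\s^{-1}\gamma}$ that identifies bijectivity of $L_{\gamma\s^{-1}\gamma}$ with bijectivity of $U_\s$, via the natural bijection $\M_n \to \p_n$, $b \mapsto n-b$. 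Since $\s \mapsto \gamma\s^{-1}\gamma$ preserves unimodality, this reduces $(1) \Leftrightarrow (3)$ to $(1) \Leftrightarrow (2)$.

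For $(1) \Rightarrow (2)$, I would induct on $n$, with trivial base case $n=1$. The structural heart of the induction is the following claim: if $\s=(0,s_1,\ldots,s_n)$ is unimodal with peak $s_N=n$ and $(a,b) \in \binom{[n]}{2}$, then $(a\,b)\s$ splits into two disjoint cycles, each \emph{unimodal on its support} (where a cycle on $A \subseteq [n]$ is called unimodal if, written starting at $\min A$, its entries first ascend then descend). This follows directly from the standard formula for $(c_i\,c_j)\s$ in terms of $\s = (c_0, c_1, \ldots, c_n)$, combined with the unimodal structure of $\s$. Partitioning $\F_\s$ by the first transposition $\t_1=(a,b)$ and applying the inductive hypothesis to each component cycle (after an order-preserving relabeling of its support to a standard interval), the residual factorizations $(\t_2,\ldots,\t_n)$ biject with shuffles of pairs of smaller parking functions. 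The principal obstacle is the combinatorial reconciliation: verifying that this decomposition of $\F_\s$ aligns with a compatible partition of $\p_n$ so that the reassembly produces each element of $\p_n$ exactly once, properly accounting for the interplay between the order-preserving relabelings on the two subsets and the linear order on $[n]$ that defines the parking function condition.

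For the contrapositive $(2) \Rightarrow (1)$: if $\s$ is not unimodal, a short case analysis shows that its cyclic sequence must contain a \emph{crossing quadruple}---four values $a < b < c < d$ appearing in cyclic order $a,c,b,d$ in $\s$ (for instance, if the ascending part $s_1<\cdots<s_N=n$ fails at some index $i$ with $s_i > s_{i+1}$, then $0 < s_{i+1} < s_i < n$ forms such a quadruple). I would use this structural obstruction to exhibit two distinct minimal factorizations of $\s$ sharing a common lower sequence, by constructing two spanning trees on $[n]$ whose edges admit orderings producing identical sequences of smaller endpoints, effectively resolving the crossing in two different ways. Because $|\F_\s| = |\p_n|$, the existence of such a collision forces $L_\s$ to be non-bijective. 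Pinning down the explicit colliding pair is the principal technical challenge in this direction.
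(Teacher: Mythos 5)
Your treatment of the easy parts matches the paper: the containments $L(\F_\s)\subseteq \p_n$, $U(\F_\s)\subseteq \M_n$ via the forest/edge-count argument is exactly the paper's proof, the reduction of bijectivity to surjectivity (or injectivity) via $|\F_\s|=|\p_n|=(n+1)^{n-1}$ is used there too, and the transfer $(1)\Leftrightarrow(3)$ via $\gamma(i)=n-i$ is the paper's Proposition on $U_\s$ (the paper conjugates factors without reversing, landing in $\F_{\gamma\s\gamma}$; your reversal variant works equally well). The problem is that both substantive implications are left unproved. For $(1)\Rightarrow(2)$ your splitting claim is correct (both cycles of $\t_1\s$ are unimodal on their supports), but the step you defer as the ``principal obstacle'' --- showing that shuffles of relabelled smaller parking functions reassemble to hit each element of $\p_n$ exactly once --- is the entire content of the theorem, not a routine verification: the same decomposition-by-first-factor and shuffle count reproduces $|\F_\s|=(n+1)^{n-1}$ for \emph{every} full cycle, yet $L_\s$ fails to be bijective for non-unimodal $\s$, so unimodality must do essential work precisely in the step you have not carried out. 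The paper instead proves surjectivity directly by an explicit algorithm (Algorithm~\ref{algo:unimodal}) that constructs an element of $L_\s^{-1}(p)$, with correctness established by tracking that every partial product remains $\s$-contiguous; bijectivity then follows from the cardinality count.

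For $(2)\Rightarrow(1)$ you likewise do not exhibit the colliding pair, and your structural claim is false as stated: the non-unimodal cycle $(0\,1\,5\,4\,2\,3)$ contains no four values $a<b<c<d$ in cyclic order $a,c,b,d$ (when the failure of unimodality lies in the descending part, the interleaving pattern that arises is $a,d,b,c$, which your statement excludes), so the proposed obstruction would have to be reformulated before it could even be used. In any case no crossing machinery is needed: non-unimodality gives a local minimum $s_{i-1}>s_i<s_{i+1}$, and the paper observes that the single parking function $(0,\ldots,0,s_i)$ then has the two explicit preimages
\begin{equation*}
(0\,s_1)\cdots\widehat{(0\,s_i)}\cdots(0\,s_n)(s_i\,s_{i+1})
\qquad\text{and}\qquad
(0\,s_1)\cdots\widehat{(0\,s_{i-1})}\cdots(0\,s_n)(s_i\,s_{i-1}),
\end{equation*}
where the hat denotes omission of that factor. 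So while your outline is reasonable in architecture, both hard directions contain genuine gaps, and the second rests on an incorrect auxiliary claim.
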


Theorem~\ref{thm:unimodal} is proved in \Section{unimodal}.  Notably,
our proof  entails an explicit  construction of
$\map{\Ls^{-1}}{\p_n}{\F_\s}$ in terms of  a simple graphical
algorithm.  Even in the case $\s=\s_n$ this algorithm for constructing
a factorization from a parking function appears to fill a  gap in the literature.

If $\s \in \cycles{n}$ is unimodal, then  Theorem~\ref{thm:unimodal} associates every factorization $f \in \F_\s$ with a unique pair $(P_L, P_U)$ of labelled
Dyck paths,  where $P_L$ corresponds to $\Ls(f) \in \p_n$ and $P_U$ to  $\Us(f)  \in  \M_n$.  Figure~\ref{fig:factarea} in fact shows the paths corresponding to the factorization  
\begin{equation}\label{eq:factinfig}
(1\,2)(3\,5)(1\,3)(7\,8)(0\,6)(7\,9)(0\,7)(1\,6)(4\,5) \in \F_9.
\end{equation}

We refer to $P_L(f)$ and $P_U(f)$ as the \emph{lower} and \emph{upper paths} of $f$.   While each of these paths is determined by the other, it is by no means obvious from  a diagram like Figure~\ref{fig:factarea} how one might  go about reconstructing $P_U(f)$ from $P_L(f)$.   In the case $\s = \s_n$ our algorithmic definition of $\map{L^{-1}}{\p_n}{\F_n}$  affords a particularly simple description of this reconstruction.  See \Section{unimodal} for details.

A final note in this section is to ask, in light of Theorem \ref{thm:unimodal}, whether
a result analogous to Theorem \ref{thm:mainresult} 
applies to unimodal cycles in general;  that is, if the analogous polynomial
in \eqref{eq:Fdefn} for an arbitrary unimodal cycle equals $I_n(q,t)$.

The highest and lowest degree terms in $I_3(q,t)$ are of degree 6 and 3, respectively, as
seen in the explicit expressions in \eqref{eq:exampinvenum}.  But a quick check will show that the unimodal cycle $\s = (0\, 1\, 3\, 2)$
has the two factorizations
\begin{align*}
	(0\, 1\, 3\, 2) &= (0\, 3) (1\, 3) (0\, 2)\\
	&= (1\, 2) (0\, 1) (2\, 3).
\end{align*}
The first factorization has lower and upper areas equal to 2 and 5,
respectively, summing to 7, while the second factorization has lower and upper areas equal
to 0 and 3, respectively.  Therefore, the polynomial $\F_{\s}(q,t)
= \sum_{\fact \in \F_{\s}} q^{\AL(\fact)} t^{\AU(\fact)}$ has terms $q^2t^5$ (which has
degree 7) and $t^3$.  It follows that $\F_{\s}(q,t)$ is neither equal to nor
multiple of $I_3(q,t)$.  An open question is to characterize which unimodal
cycles have lower and upper polynomials equal to $I_n(q,t)$.

\subsection{Depth, Difference, and Bounce}
\label{sec:ddb}

Specializing Theorem~\ref{thm:mainresult} at $q=t$ gives $I_n(q,q)=F_n(q,q)$.  
Let $\depthGS_n(q)$ be the common value of these series; that is,
\begin{equation}
\label{eq:depth}
	\depthGS_n(q) := \sum_{T \in \T_n} q^{\inv(T)+\ninv(T)}
	= \sum_{f \in \F_n} q^{\AL(f)+\AU(f)}.
\end{equation}

Observe that $\inv(T)+\ninv(T)$ is the number of pairs $(i,j)$ of distinct
vertices in $T \in \T_n$ such that $j$ is a descendant of $i$. This is easily
seen to be the sum of the distances from the root to all other vertices, a
quantity we call the \emph{total depth}\footnote{Also  known in the literature as the \emph{path
length} of $T$.} of $T$ and denote  by $\tdep(T)$.  Interpreting $D_n(q)$ in this way, it  is  routine  to show that  $D(x) = \sum_n D_n(q) \frac{x^n}{n!}$ satisfies $D(x) = \exp( qxD(qx))$.  The initial values of $D_n(q)$ are found to be $D_0(q)=1, D_1(q)=q$ and
\begin{align*}
	D_2(q) &= q^2(1+2q) \\
	D_3(q) &= q^3(1+6q+3q^2+6q^3) \\
	D_4(q) &= q^4(1+12q+24q^2+28q^3+24q^4+12q^5+24q^6).
\end{align*}

Returning to~\eqref{eq:depth}, note that  for $f = \genericfact \in
\F_n$  we have $\AL(f)+\AU(f)=\sum_i (b_i-a_i)$. We call this quantity
the  \emph{total difference} of $f$, written $\diff{f}$.   From the
graphical point of view,  $\diff{f}$ has the natural interpretation as the total area
bounded between the upper and lower paths of $f$.  The reader is again referred to
Figure \ref{fig:factarea} with reference to factorization
\eqref{eq:factinfig}.

Besides counting trees by total depth and factorizations by total difference,  it transpires that $\depthGS_n(q)$ also enumerates parking functions with respect  to Haglund's  ``bounce'' statistic~\cite{hag}, whose definition we now recall.

\label{page:bounceintro}

Let $P$ be the  path corresponding to  $p=(a_1, \dots, a_n) \in \p_n$. Imagine a ball starting at  the origin and moving east until it encounters a vertical step of $P$, at which time it ``bounces'' north and continues until it encounters the line $y=x$, where it bounces east until it hits $P$, etc.  The ball continues to bounce between $P$ and the diagonal until it arrives  
at $(n,n)$, as shown in Figure~\ref{fig:bouncepath}.  Let $0 = i_1 < \cdots < i_k = n$ be the $x$-coordinates at which the ball meets $y=x$.  Then the \emph{bounce} of $p$ is defined by 
\begin{equation}\label{eq:defbounce}
	\bnce(p) := \sum_j (n-i_j).
\end{equation}
\begin{figure}[t]
	\centering
	\includegraphics[height=5cm]{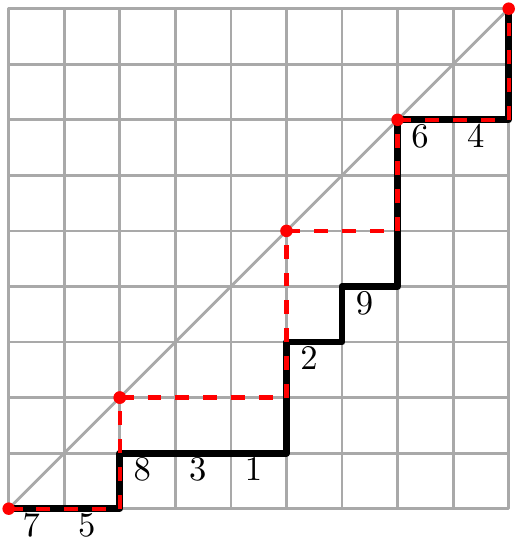}
	\caption{The bounce path of  ${p=(1,3,1,7,0,7,0,1,4) \in \p_9}$ is shown in red. This path meets the diagonal at $x=0,2,5,7,9$, giving $\bnce(p)=(9-0)+(9-2)+(9-5)+(9-7)+(9-9)=22$.} 
	\label{fig:bouncepath}
\end{figure}

Note that  the labels on $P$ are immaterial in the definition of $\bnce(p)$. Indeed,  bounce is truly a statistic on Dyck paths, not parking functions.  Nonetheless, the distribution of $\bnce(p)$ over $\p_n$ is of interest:

\begin{theorem}
\label{thm:bounce}
For any $k \in \N$, the following objects are equinumerous: (1) trees $T \in \T_n$ of total depth $k$, (2) minimal factorizations $f \in \F_n$ with total difference $k$, and (3) parking functions $p \in \p_n$ whose  bounce is $k$. 
That is, we have
$$
	D_n(q) =
	\sum_{T \in \T_n} q^{\tdep(T)} 
	= 
	\sum_{f \in \F_n} q^{\diff{f}}
		=
	\sum_{p \in \p_n} q^{\bnce(p)}.
$$
\end{theorem}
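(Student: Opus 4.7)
The first two equalities should follow immediately from Theorem~\ref{thm:mainresult}. Setting $t = q$ there gives $\sum_T q^{\inv(T) + \ninv(T)} = \sum_f q^{\AL(f) + \AU(f)}$; it remains to observe the pointwise identities $\inv(T) + \ninv(T) = \tdep(T)$ (every ancestor--proper-descendant pair in $T$ is either an inversion or a coinversion, and $\tdep(T)$ counts precisely these pairs) and $\AL(f) + \AU(f) = \sum_i(b_i - a_i) = \diff{f}$.

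The substantive content is therefore $\sum_{p \in \p_n} q^{\bnce(p)} = D_n(q)$. My plan is to classify both parking functions and trees according to a common composition of $n$. For $p \in \p_n$ with bounce sequence $0 = i_1 < \cdots < i_k = n$, define its \emph{bounce composition} $d(p) := (i_2 - i_1, \ldots, i_k - i_{k-1})$. A short manipulation of \eqref{eq:defbounce} yields $\bnce(p) = \sum_{\ell} \ell\, d_\ell$, and tautologically $\tdep(T) = \sum_\ell \ell L_\ell$, where $L_\ell$ denotes the number of depth-$\ell$ vertices of $T$. The aim is then to show that for every composition $d = (d_1, \ldots, d_r)$ of $n$, the number of parking functions with bounce composition $d$ equals the number of trees with layer sizes $(1, d_1, \ldots, d_r)$, and that both equal
\[
    \binom{n}{d_1, \ldots, d_r}\prod_{j=1}^{r} d_{j-1}^{\,d_j}, \qquad d_0 := 1.
\]

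The tree count is routine: partition the non-root labels into layers, then select, for each depth-$j$ vertex, a parent among the $d_{j-1}$ vertices at depth $j-1$. For the parking-function count, the plan is first to establish the structural lemma that a Dyck path with horizontal-step profile $(m_h)$ has bounce composition $d$ if and only if $\sum_{h \leq i_j} m_h = i_{j+1}$ for each $j = 1, \ldots, k-1$. This should be derived from the geometric fact that the bounce ball moving east at height $y = i_j$ first encounters a vertical step of $P$ at the $x$-coordinate equal to the cumulative number of horizontal steps at heights $\leq i_j$. Taking consecutive differences, the condition becomes that the block sums of $(m_h)$ on the intervals $\{0\}, [1, i_2], [i_2+1, i_3], \ldots, [i_{k-2}+1, i_{k-1}]$ equal $d_1, d_2, \ldots, d_{k-1}$ respectively, and I would verify that these constraints automatically force $P$ to lie below the diagonal. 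Summing the multinomial $n!/\prod m_h!$ of labelings over admissible profiles block by block and invoking $\sum_{m_1 + \cdots + m_s = N}\prod 1/m_i! = s^N/N!$ then yields the displayed formula.

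The main obstacle I anticipate is the geometric bookkeeping behind the structural lemma together with the below-diagonal verification, as this requires careful attention to the mechanics of Haglund's bounce. Once that is in place, the theorem follows at once:
\[
    \sum_{p \in \p_n} q^{\bnce(p)} \;=\; \sum_{d} \binom{n}{d_1, \ldots, d_r}\prod_j d_{j-1}^{d_j}\, q^{\sum_\ell \ell d_\ell} \;=\; \sum_{T \in \T_n} q^{\tdep(T)} \;=\; D_n(q),
\]
completing the argument.
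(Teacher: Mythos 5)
Your reduction of the first two equalities to Theorem~\ref{thm:mainresult} at $t=q$, together with the pointwise identities $\inv(T)+\ninv(T)=\tdep(T)$ and $\AL(f)+\AU(f)=\diff{f}$, is exactly what the paper does; but for the remaining equality your route is correct and genuinely different from the paper's. The paper argues bijectively: writing $w_1,\dots,w_n$ for the labels of the Dyck path read left to right (with $w_0=0$), it defines $\theta:\p_n\to\T_n$ by letting the labels at height $i$ be the children of $w_i$, and shows that $\theta$ carries a pair of refined statistics $\pinv,\pninv$ (which sum to $\bnce$) to $\inv,\ninv$; combined with Theorem~\ref{thm:mainresult} this yields the stronger bivariate identity $B_n(q,t)=I_n(q,t)=F_n(q,t)$ (Theorem~\ref{thm:bounceref}), of which the present theorem is the $t=q$ specialization. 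You instead enumerate both sides by a common composition $d=(d_1,\dots,d_r)$ of $n$: your structural lemma $i_{j+1}=\sum_{h\le i_j}m_h$ is the correct description of the bounce mechanics for paths weakly below the diagonal (it also gives $d_j\ge 1$ via the parking inequality $\sum_{h\le i_j}m_h\ge i_j+1$); the block constraints really do force the parking condition, since $\sum_{h\le t}m_h\ge\sum_{h\le i_j}m_h=i_{j+1}\ge t+1$ whenever $i_j\le t<i_{j+1}$; and the block-by-block multinomial summation produces $\binom{n}{d_1,\dots,d_r}\prod_j d_{j-1}^{d_j}$, which matches the straightforward count of trees with layer sizes $(1,d_1,\dots,d_r)$, with both statistics equal to $\sum_\ell \ell d_\ell$. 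As for what each approach buys: the paper's bijection is explicit and delivers the $(q,t)$-refinement in which bounce splits as $\pinv+\pninv$ matching inversions and coinversions; your argument is purely enumerative but refines in a different direction, identifying the joint distribution of the bounce composition on $\p_n$ with that of the depth profile on $\T_n$, which is finer than equidistribution of $\bnce$ and $\tdep$ alone, though it does not recover Theorem~\ref{thm:bounceref}.
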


Theorem~\ref{thm:bounce} will be proved in \Section{bounce} as a corollary of a more refined connection between bounce and  total depth of trees.  The relation between bounce and total difference comes indirectly via Theorem~\ref{thm:mainresult}.  In particular, we caution the reader that it is \emph{not} generally  the case that $\bnce(L(f))=\diff{f}$.

\subsection{Further Comments}

Given Theorem~\ref{thm:mainresult}, one might expect Kreweras' formula (Theorem~\ref{thm:kreweras}) to admit a refinement of the form
$$
	I_n(q,t)
	= \sum_{p \in \p_n} q^{\area(p)} t^{\mathrm{stat}(p)},
$$
where $\mathrm{stat}(\cdot)$ is some natural statistic on $\p_n$.   Indeed, such a statistic can be defined  in terms of the ``parking processes'' for which  parking functions are named.

Consider a doubly infinite parking lot consisting of stalls labelled $\ldots,-2,-1,0,1,2,3,\ldots$ from west to east.  A total of $n$ cars wish to park in the lot, one after the other.   The $i$-th car enters the lot at stall $a_i$ and either parks there, if empty, or continues to the  first unoccupied spot  to the east.  It is well known that the  sequence $(a_1,\ldots,a_n) \in \N^n$ is a parking function if and only if the cars occupy stalls $0$ through $n-1$ after all are parked~\cite{stanleyparking}. 

Suppose $p=(a_1,\ldots,a_n) \in \p_n$ and let $c_i$ be the stall in
which car $i$   parks according to this procedure.  Then
$(c_1,\ldots,c_n)$ is a permutation of $[n-1]$, and $c_i-a_i$ is the
number of  stalls  the $i$-th car must ``jump``  before finding a place to park.   Following Shin~\cite{shin}, define 
$
	\jump{p}:=\sum_i (c_i - a_i)
$    
and observe that 
$$
\jump{p} = (0+1+\cdots+n-1)-(a_1+\cdots+a_n)=\area(p).
$$   Shin proves Theorem~\ref{thm:kreweras} by giving a bijection  $\map{\phi}{\T_n}{\p_n}$ such that $\inv(T)=\jump{\phi(T)}$.   

Now consider the state of the lot after the first $i$ cars have
parked. Let $d_i$ be the closest empty stall  \emph{west} of car $i$
at this point.  Thus $a_i-d_i$ is the number of stalls a  car would
jump  if it were to enter at $a_i$ and take the first available spot
to the west.     If we define $\cojump{p} := \sum_i (a_i-d_i)$, then it is straightforward to deduce from Shin's argument that  $\ninv(T)=\cojump{\phi(T)}$.  There follows 
\begin{equation}
\label{eq:shin}
	I_n(q,t) = \sum_{p \in \p_n} q^{\jump{p}} t^{\cojump{p}}.
\end{equation}

At first glance, our proof of Theorem~\ref{thm:mainresult} and the
proof of~\eqref{eq:shin} described above appear to be quite different.
However,  these   results can be unified by viewing
$(\mathrm{jump},\mathrm{cojump})$ and $(\AL, \AU)$ as Mahonian
bi-statistics on trees. Just as $(\jump{p},\cojump{p})$ corresponds
with $(\inv(T),\ninv(T))$, we have found that $(\AL(f),\AU(f))$
corresponds with $(\mathrm{maj}(T),\mathrm{comaj}(T))$,  the
\emph{major/comajor index} of $T$.    Moreover, we have discovered this approach  is
readily generalized, allowing trees to be replaced by  $k$-cacti and
permitting various other refinements.   These results on the major indices, their
connections to factorizations and their $k$ generalizations were presented at the CanaDAM 2019 conference and 
will be exposited in detail in the forthcoming
article \cite{irkpaper} (in preparation).

\section{Minimal Factorizations and Arch Diagrams}
\label{sec:minimalfactorizations}

\newcommand{\graph}[1]{G(#1)}

Let $\pi$ be an arbitrary permutation. 
A \emph{factorization} of $\pi$ is any sequence $f=(\t_1,\ldots,\t_m)$ of transpositions satisfying $\t_1\cdots\t_m=\pi$. We say $f$ is \emph{minimal} if there is no factorization of $\pi$ having fewer than $m$ factors.

Observe that factorizations in $\Sym{[n]}$ naturally correspond with edge-labelled graphs on  $[n]$.  The \emph{graph} of $f=(a_1\,b_1)\cdots(a_m\,b_m)$,  denoted $\graph{f}$, has an edge $a_i b_i$ labelled $i$ for each $i \in [1,m]$. 
See Figure~\ref{fig:archdiagram} (left).
\begin{figure}[t]
		\centering
		\includegraphics[width=.9\textwidth]{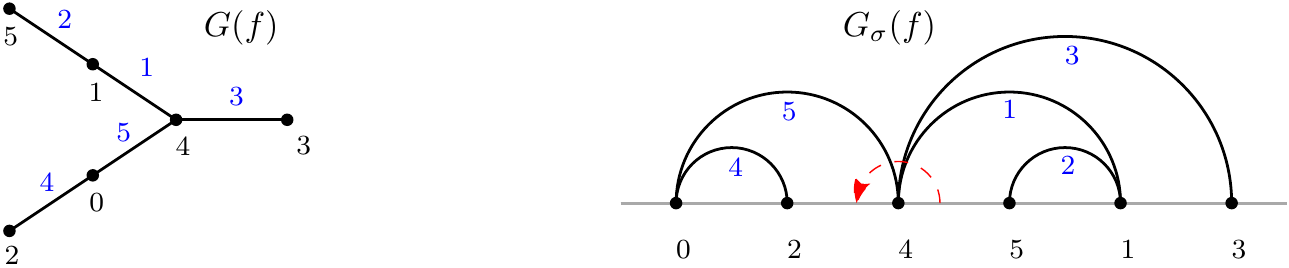}
		\caption{(left) The graph of $f=(1\,4)(1\,5)(3\,4)(0\,2)(0\,4) \in \F_{\s}$, where $\s=(0\,2\,4\,5\,1\,3)$. (right) The $\s$-diagram of $f$. The rotator of vertex 4 is $(1,3,5)$.}
		\label{fig:archdiagram}
\end{figure} 

Our aim in this section is to associate  every $f \in \F_{\s}$ with a particular planar embedding of  $\graph{f}$.  For completeness we first establish a well-known graphical characterization of minimality.

Consider the effect of multiplying a permutation $\rho$ by a transposition
$\t=(a\,b)$. If $a,b$ lie on the same cycle of $\rho$, then this cycle is split
into two cycles of the product $\rho\t$ (one containing $a$ and the other $b$).
In this case we say $\t$ \emph{cuts} $\rho$.  If  $a,b$ instead lie on
distinct cycles of $\rho$, then these are merged into one cycle of
$\rho\t$ and we say $\t$ \emph{joins} $\rho$.  The factor $\t_i$ is a
\emph{join} (respectively, \emph{cut}) of the factorization $f =
(\t_1,\ldots,\t_m)$  if it joins (cuts)  the partial product $\t_1
\cdots \t_{i-1}$.

Suppose the cycles of $\pi$ are of lengths $n_1,\ldots,n_k$. Then
clearly $\pi$ can  be factored into $\sum_i (n_i-1)=n+1-k$
transpositions.  But any factorization of $\pi$ must contain at least
this many  joins, since the effect of multiplying its factors is to
merge $n+1$ trivial cycles of the identity into  $k$ cycles of $\pi$.
A factorization of $\pi$ is therefore minimal if it is of length $n+1-\len{\pi}$, where $\len{\pi}$ denotes the number of cycles of $\pi$.  Equivalently, $f$ is minimal if each  of its factors is a join.

Let $f=(\t_1,\ldots,\t_m)$ be a minimal factorization and, for $i \in
[n]$, let $G_i$ be the spanning subgraph of $\graph{f}$ containing
only those edges with label $\leq i$. Thus, $G_0$ is the empty graph on
$[n]$, and $G_i$ is obtained from $G_{i-1}$ by adding edge $a_ib_i$, where $\t_i=(a_i\,b_i)$. Since all factors of $f$ are joins, we see inductively that
each cycle of   $\t_1 \cdots \t_{i-1}$ is supported by the vertices of some
component of $G_{i-1}$. Since $a_i$ and $b_i$ belong to different  cycles of $\t_1 \cdots \t_{i-1}$ (\emph{i.e.} different components of $G_{i-1}$) we conclude that $G_i$ is a forest. This leads to:

\begin{lemma}
\label{lem:minimal1}
Let $f$ be a factorization of $\pi \in \Sym{[n]}$. 
Then $f$ is minimal if and only if $G(f)$ is a forest consisting of $\len{\pi}$ trees, each of whose sets of vertices support some cycle of $\pi$.  In particular, $f$ is a minimal factorization of a full cycle if and only if $\graph{f}$ is a tree.
\end{lemma}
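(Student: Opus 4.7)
The preceding discussion already carries most of the weight of the lemma, so my plan is essentially to formalize that discussion and then handle the converse by a one-line edge count.

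For the forward direction, suppose $f=(\tau_1,\ldots,\tau_m)$ is minimal, so every $\tau_i$ is a join of $\tau_1\cdots\tau_{i-1}$ and $m=n+1-\len{\pi}$. I would prove by induction on $i$ the following precise statement: the components of $G_i$ are exactly the subgraphs spanning the supports of the cycles of $\tau_1\cdots\tau_i$. The base case $i=0$ is immediate since $G_0$ has $n+1$ singleton components, matching the $n+1$ fixed points of $\iota$. For the inductive step, write $\tau_i=(a\,b)$; since $\tau_i$ is a join, $a$ and $b$ lie in distinct cycles of $\tau_1\cdots\tau_{i-1}$, hence (by hypothesis) in distinct components of $G_{i-1}$. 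Adding the edge $ab$ therefore merges two components into one and creates no cycle, so $G_i$ is still a forest, and its new component's vertex set is exactly the support of the cycle of $\tau_1\cdots\tau_i$ obtained by merging the two cycles containing $a$ and $b$. Setting $i=m$ gives that $\graph{f}$ is a forest whose $\len{\pi}=n+1-m$ components are supported on the cycles of $\pi$.

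For the backward direction, suppose $\graph{f}$ is a forest on $[n]$ whose $\len{\pi}$ components are supported on the cycles of $\pi$. A forest on $n+1$ vertices with $\len{\pi}$ components has exactly $(n+1)-\len{\pi}$ edges, so $f$ has precisely $(n+1)-\len{\pi}$ factors. By the remark preceding the lemma, this is the minimum possible number of transpositions in any factorization of $\pi$, so $f$ is minimal. The ``in particular'' clause is then immediate: a full cycle in $\Sym{[n]}$ has $\len{\pi}=1$ and support $[n]$, so the forest condition collapses to the statement that $\graph{f}$ is a spanning tree on $[n]$.

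I do not anticipate a serious obstacle. The only subtle point is the inductive claim in the forward direction: one must be careful to track not merely the containment of each cycle in some component but the exact equality between components and cycle supports, since this is what guarantees the correct component count at the end. Once the invariant is correctly stated, the induction is automatic from the definition of a join, and the converse reduces to the well-known fact $m\ge n+1-\len{\pi}$ already used above.
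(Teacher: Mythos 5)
Your proof is correct and follows essentially the same route as the paper: the forward direction is the same induction on the partial graphs $G_i$, using that minimality forces every factor to be a join so that components track the cycle supports of the partial products, and the converse is the edge count $m=(n+1)-\len{\pi}$ combined with the lower bound already established before the lemma. Your strengthening of the invariant to exact equality of components and cycle supports is a sensible tightening of the paper's phrasing, not a different argument.
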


\newcommand{\mathsc}[1]{{\normalfont\textsc{#1}}}
\newcommand{\chord}[1]{\mathsc{chord}(#1)}
\newcommand{\arch}[1]{\mathsc{arch}(#1)}
\newcommand{\archst}[1]{\mathsc{arch}'(#1)}
\newcommand{\FACT}[1]{\mathsc{fact}(#1)}
\newcommand{\FACTs}[2]{\mathsc{fact}_{#1}(#2)}
\newcommand{\Edge}[1]{\mathcal{T}_{#1}}
\newcommand{\TVE}[2]{\mathcal{T}(#1,#2)}
\newcommand{\cd}{c.d.}
\newcommand{\cds}{c.d.'s}
\newcommand{\archset}[1]{\mathcal{A}_{#1}}
\newcommand{\Trees}[2]{\mathcal{T}(#1,#2)}
\newcommand{\hatarchset}[1]{\hat{\mathcal{A}}_{#1}}
\newcommand{\Sset}[1]{\mathcal{S}_{#1}}
\newcommand{\sdiagram}[2]{G_{#1}(#2)}

Let $\s=(s_0\,s_1\,\cdots\,s_n) \in \cycles{n}$, with $s_0=0$, and let $f$ be any factorization in $\Sym{[n]}$.  The $\emph{$\s$-diagram}$ of $f$, denoted $\sdiagram{\s}{f}$, is a drawing of
$\graph{f}$ in the plane in which vertex $s_i$ lies at the point $(0,i)$ and edges are rendered as semicircular arches above the $x$-axis.  The \emph{rotator} of a vertex $v$ in $\sdiagram{\s}{f}$ is the ordered list of arch labels encountered on a counter-clockwise tour about $v$ beginning on the axis.

Figure~\ref{fig:archdiagram} (right) displays $\sdiagram{\s}{f}$ for a minimal factorization $f$ of the full cycle $\s=(0\,2\,4\,5\,1\,3)$. Observe that it is a tree, in accordance with Lemma~\ref{lem:minimal1}. Moreover, its edges meet only at  endpoints and all its rotators are increasing. It turns out that these conditions  characterize the set $\{\sdiagram{\s}{f} \,:\, f \in \F_\s\}$. 

\begin{theorem}
\label{thm:gy}
Let $f$ be a factorization in $\Sym{[n]}$ and let $\s \in \cycles{n}$.  Then $f \in \F_\s$ if and only if  $\sdiagram{\s}{f}$ is a planar embedding of a tree in which every rotator is increasing.
\end{theorem}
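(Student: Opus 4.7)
The plan is to prove both directions simultaneously by induction on $n$. The base case $n=1$ is immediate: the unique element of $\F_{(0\,1)}$ has $\s$-diagram consisting of a single arch, trivially a planar tree with a trivially increasing singleton rotator at each endpoint.

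For the inductive step, the key reduction isolates the last factor $\t_n = (a\,b)$ of $f = (\t_1, \ldots, \t_n)$. Writing $a = s_i$, $b = s_j$ with $i<j$, Lemma~\ref{lem:minimal1} ensures that $\graph{f}$ is a tree; moreover, since $\t_n$ is a join, $\pi := \t_1\cdots\t_{n-1}$ has exactly two cycles $C_1 = (s_i\,s_{i+1}\,\cdots\,s_{j-1})$ and $C_2 = (s_j\,s_{j+1}\,\cdots\,s_n\,s_0\,\cdots\,s_{i-1})$, whose supports occupy the contiguous $y$-ranges $[i,j-1]$ and $[0,i-1]\cup[j,n]$ in the diagram. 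Since transpositions with disjoint supports commute, the first $n-1$ factors split into subsequences $f_1 \in \F_{C_1}$ and $f_2 \in \F_{C_2}$ inheriting the original label order, and the inductive hypothesis applies to each in its respective cycle-diagram.

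For the forward direction I would verify that combining the sub-diagrams with the arch $\t_n$ yields a planar tree with increasing rotators in $\sdiagram{\s}{f}$. Planarity rests on the elementary fact that two semicircular arches with diameters on the $y$-axis cross precisely when their diameters interleave: arches of $f_1$ (with diameters in $[i,j]$) and of $f_2$ (with diameters either contained in the complement or strictly containing $[i,j]$) never interleave with $\t_n$'s diameter $[i,j]$. Further, non-crossing of semicircular arches depends only on the cyclic order of endpoints, which transfers the inductively-guaranteed planarity of the $C_2$-diagram to the $\s$-diagram despite $C_2$'s vertices being arranged in a cyclically rotated order. For the rotators, a direct geometric check shows that $\t_n$ occupies the last position in the rotator at both $a$ (as the ``most horizontal'' upward arch, reaching the farthest vertex $s_j$) and $b$ (as the ``most vertical'' downward arch, reaching the nearest vertex $s_i$); since $n$ is the largest label, appending $\t_n$ at the end of the inductively-increasing sub-rotators preserves the increasing property.

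The reverse direction follows the analogous inductive template: given a planar tree with increasing rotators, the edge labeled $n$ is necessarily last in the rotator at both its endpoints, and removing it yields two planar sub-trees whose sub-rotators remain increasing (as subsequences of increasing sequences); IH then identifies each sub-tree with a minimal factorization of the corresponding cycle, and a direct computation confirms $C_1 C_2(a\,b) = \s$. The principal technical obstacle lies in the rotator analysis at $b$ in the forward direction, where one must verify that the cyclic order of $f_2$'s arches at $b$ in its $C_2$-diagram matches (up to cyclic rotation) the order in the $\s$-diagram, so that $\t_n$ is geometrically inserted at the ``end'' of the sub-rotator. Once this geometric correspondence is pinned down, the induction closes cleanly.
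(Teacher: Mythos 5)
Your overall strategy is sound, and it is worth noting that the paper itself gives no direct proof of Theorem~\ref{thm:gy}: it cites Goulden and Yong for the case $\s=\s_n$ and observes that the general case follows by relabelling. So a self-contained induction like yours is a genuinely different route, and its skeleton is correct: with $\t_n=(s_i\,s_j)$, $i<j$, the partial product $\t_1\cdots\t_{n-1}=\s(s_i\,s_j)$ has exactly the two cycles $C_1=(s_i\,\cdots\,s_{j-1})$ and $C_2=(s_j\,\cdots\,s_n\,s_0\,\cdots\,s_{i-1})$, and Lemma~\ref{lem:minimal1} together with commutation of disjointly supported transpositions does split $f$ into $f_1\in\F_{C_1}$ and $f_2\in\F_{C_2}$. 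However, as written the argument has two genuine gaps.

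First, the induction hypothesis: the theorem is stated for cycles on $[n]$ drawn with $0$ leftmost, while $C_1$ need not contain $0$, and you invoke the hypothesis for ``its respective cycle-diagram'' without fixing the linear order. You must either prove a strengthened statement for an arbitrary cycle drawn from an arbitrary starting vertex (which requires showing that both noncrossing and the increasing-rotator condition are invariant under cyclic rotation of the vertex arrangement; for rotators this holds because the rotator at a vertex is always ``right arcs by increasing span, then left arcs by decreasing span'' --- compare the rotator $(1,3,5)$ of vertex $4$ in Figure~\ref{fig:archdiagram} --- and that list is unchanged by rotation), or simply choose starting vertices so the induced orders match: drawing $C_2$ starting at $0$ gives exactly the order $s_0,\ldots,s_{i-1},s_j,\ldots,s_n$ induced inside the $\s$-diagram, and drawing $C_1$ starting at $s_i$ gives $s_i,\ldots,s_{j-1}$. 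With that choice the ``cyclic rotation'' difficulty you flag at $b$ never arises, and edge $n$ is last in both rotators because it is the longest right arc at $s_i$ (which has no left arcs) and the shortest left arc at $s_j$. Second, and unacknowledged: in the reverse direction you delete the edge labelled $n$, joining positions $i<j$, and assert that the two components correspond to ``the corresponding cycles'' with $C_1C_2(a\,b)=\s$. That computation is valid only if the component of $s_i$ occupies exactly positions $i,\ldots,j-1$ and the other component the complement; this contiguity is not automatic and must be deduced from the hypotheses. Since $n$ is the largest label, increasing rotators force it to be last in both of its rotators, which (by the rotator convention above) means every other edge at $s_i$ ends strictly between positions $i$ and $j$ and no edge at $s_j$ ends strictly between them; combined with noncrossing (no edge may join a position strictly inside $(i,j)$ to one outside $[i,j]$), this pins down the two supports, and only then does the product computation return $\s$. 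Supplying these two arguments closes your induction.
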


The  case $\s=\s_n$ of Theorem~\ref{thm:gy} can be found in~\cite{gouldenyong},  albeit phrased somewhat differently in terms of ``circle chord diagrams''. (These are obtained from $\s$-diagrams by wrapping the $x$-axis into a circle so that arches become chords.) The general case follows immediately  by relabelling. We mention in passing that Theorem~\ref{thm:gy} is but one manifestation of a  general equivalence between factorizations in $\Sym{[n]}$ and  embeddings of graphs on surfaces.  There is a vast literature on this subject; see for example the surveys~\cite{lando-zvonkin,gouldensurvey} and   references therein.

\newcommand{\archx}[1]{\mathsc{arch}^*(#1)}

In light of Theorem~\ref{thm:gy}, we define an \emph{arch diagram} to be any noncrossing embedding of an edge-labelled tree  into the upper half-plane $\{(x,y):y \geq 0\}$ such that  vertices lie on the $x$-axis  and rotators are increasing.  Let  $\archset{n}$ denote the set of topologically inequivalent arch diagrams with edges  labelled $1,\ldots,n$.

Let $f \in \F_\s$ for some $\s \in \cycles{n}$. Then stripping $\sdiagram{\s}{f}$ of its vertex labels leaves an arch diagram $\mathsc{arch}_\s(f) \in \archset{n}$.  In fact, since the missing labels can be recovered from $\s$,   Theorem~\ref{thm:gy} implies  $f \mapsto \mathsc{arch}_\s(f)$ is a bijection between $\F_\s$ and $\archset{n}$.  The inverse map $\map{\mathsc{fact}_\s}{\archset{n}}{\F_\s}$ is illustrated in Figure~\ref{fig:archfact}. For simplicity we shall write $\mathsc{arch}$ and $\mathsc{fact}$ in place of $\mathsc{arch}_{\s_n}$ and $\mathsc{fact}_{\s_n}$ when $n$ is understood from context. 
\begin{figure}[t]
		\centering
 \includegraphics[width=.95\textwidth]{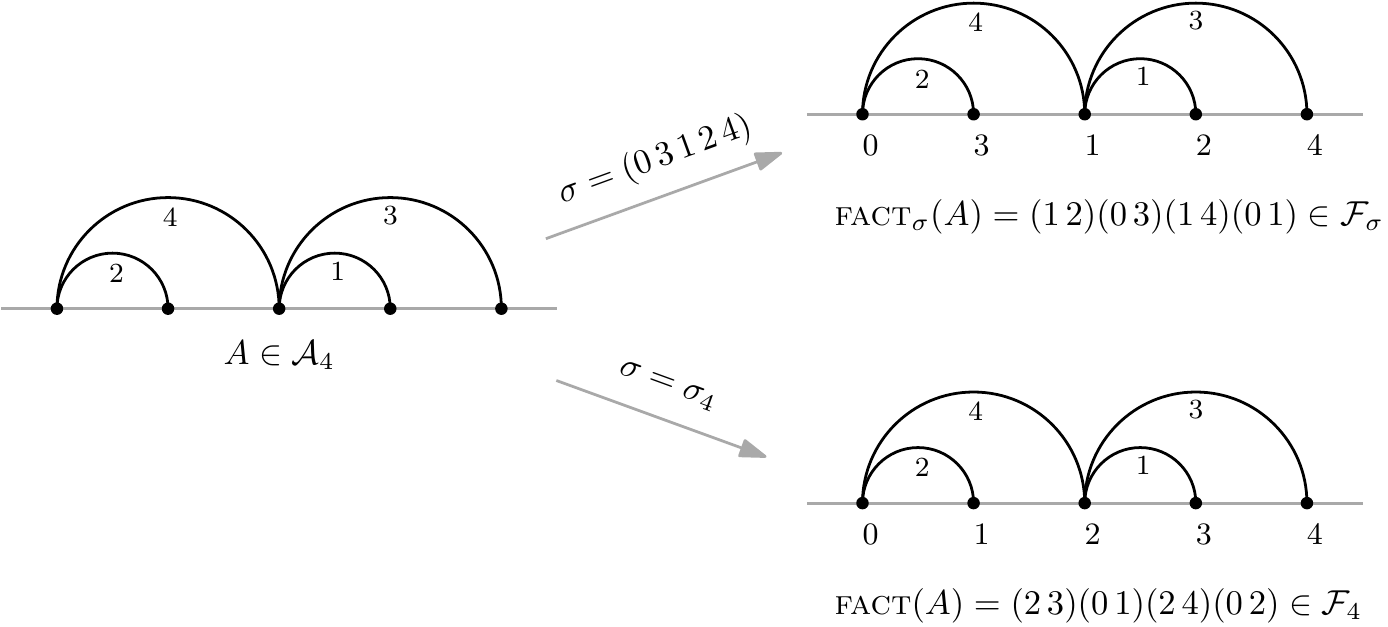}
		\caption{The correspondence between $\archset{n}$ and $\F_{\s}$.}
		\label{fig:archfact}
\end{figure}

\ignore{
\newcommand{\rootset}[1]{\mathcal{E}_{#1}}
\begin{remark}
Let $v$ be any vertex of an edge-labelled tree $T$. Observe that there is a unique arch diagram skeletal graph $T$ and rightmost vertex $v$.  and $v$ as its rightmost vertex. Indeed, once $v$ is embedded as the rightmost vertex, the relative ordering along the axis of all other vertices is forced by the increasing rotator condition. 

Let $\rootset{n}$ be the set of vertex rooted trees on $n+1$ unlabelled vertices with edges distinctly labelled $1,\ldots,n$.   The preceding comments establish an equivalence between $\archset{n} \leftrightarrow \rootset{n}$, wherein the rightmost vertex of $A \in \archset{n}$ is viewed as the root of its skeletal tree. 

There is  a simple bijection $\rootset{n} \rightarrow \T_n$ under which the root of $T \in
\rootset{n}$ is assigned label 0 and edge labels are ``pushed'' away from the root onto
the remaining vertices. The composite mapping $\F_n \xrightarrow{\mathsc{arch}}
\archset{n} \rightarrow \rootset{n} \rightarrow \T_n$  is a variant of Moszkowski's
bijection~\cite{mosz}. See Figure~\ref{fig:moz}.\attentionpreet{I think you have a reason
for changing the root (i.e. why not just keep 0 as the 0 vertex?).  Just checking.  if you
don't have a reason, we may consider changing this.}
\begin{figure}[t]
\centering
\includegraphics[width=.85\textwidth]{moszkowski2.pdf}
\caption{A variant of Moszkowski's bijection, mapping a factorization $f \in \F_5$ to a tree $T \in \T_5$.}
\label{fig:moz}
\end{figure} 
}

\section{Factorizations and Trees}
\label{sec:mainthm}

In this section we shall  prove Theorem~\ref{thm:mainresult}  by showing that the polynomial sequences $\{I_n(q,t)\}$ and $\{F_n(q,t)\}$ satisfy a common recursion defined in terms of their exponential generating series
\begin{equation}
	F(x; q, t)  = \sum_{n \geq 0} F_n(q,t) \frac{x^n}{n!}\,\,\,\,\,\, \textnormal{ and
	}\,\,\,\,\,\, I(x; q, t)  = \sum_{n \geq 0} I_n(q,t) \frac{x^n}{n!}.
	\label{eq:deffi}
\end{equation}

\newcommand{\qt}[1]{(t^{#1-1}+t^{#1-2}q + t^{#1-3}q^2+\cdots + q^{#1-1})}

The recursion for $I_n(q,t)$ comes via the familiar decomposition of a tree $T \in \T_n$  into a set of rooted labelled trees by deletion of its root.   Carefully accounting for inversions and coinversions  under this decomposition yields
\begin{equation}
	I(x; q, t) = \exp \left( t \sum_{n=1}^\infty \qt{n}
	I_{n-1}(q,t) \frac{x^n}{n!}\right), \label{eq:expdecomp}
\end{equation}
which is readily seen to be equivalent to 
\begin{equation}
\label{eq:recursion}
	I_{n+1}(q,t) = \sum_{i=0}^n \binom{n}{i} t (t^i+t^{i-1}q+t^{i-2}q^2+\cdots+q^i) I_i(q,t)  I_{n-i}(q,t).
\end{equation}
Note that~\eqref{eq:recursion} reduces to~\eqref{eq:mallowsrecursion} at $t=1$. 
The proofs of~\eqref{eq:expdecomp} and~\eqref{eq:recursion} at $t=1$ can be found in~\cite[Theorem 3]{gessel} or~\cite[Theorem 1]{kreweras}.  Proofs for arbitrary $t$
require only trivial modifications to their proofs and are omitted here.

Since $F_0(q,t)=I_0(q,t)=1$, Theorem~\ref{thm:mainresult} follows immediately from the following analogue of~\eqref{eq:expdecomp} for minimal factorizations.

\begin{theorem}\label{thm:mainthm}
	The series $F(x;q,t)$ satisfies
	\begin{equation*}
	F(x;q,t) = \exp \left( t \sum_{n=1}^\infty \qt{n}
	F_{n-1}(q,t) \frac{x^n}{n!}\right).	
	\end{equation*}
\end{theorem}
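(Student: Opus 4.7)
My plan is to prove Theorem~\ref{thm:mainthm} by establishing the equivalent recursion
\[
F_{n+1}(q,t) = \sum_{i=0}^{n} \binom{n}{i} \, t \, \qt{i+1} \, F_i(q,t) \, F_{n-i}(q,t), \qquad n \geq 0,
\]
obtained from the proposed exponential identity by differentiating with respect to $x$ and comparing coefficients of $x^n/n!$. Together with the initial value $F_0(q,t)=1$, this recursion determines $F(x;q,t)$ uniquely, and it is the exact analogue of the tree-side recursion~\eqref{eq:recursion}.

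To derive this recursion combinatorially, I would work in the arch diagram model of Section~\ref{sec:minimalfactorizations}. Given $f \in \F_{n+1}$, the diagram $A = \sdiagram{\s_{n+1}}{f}$ is a planar tree by Lemma~\ref{lem:minimal1} and Theorem~\ref{thm:gy}. I plan to distinguish the edge $e^*$ of $A$ that is incident to vertex $0$ and lies on the unique path in $A$ from vertex $0$ to vertex $1$. Removing $e^*$ separates $A$ into two components $A_0$ and $A_1$, with vertex sets $V_0 \ni 0$ and $V_1 \ni 1$ of sizes $n+1-i$ and $i+1$, respectively. Applying Theorem~\ref{thm:gy} to each component (after order-preserving relabelling of its vertex set to an initial segment of $\N$) yields minimal factorizations $f_0 \in \F_{n-i}$ and $f_1 \in \F_i$. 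The binomial $\binom{n}{i}$ counts the choices of $V_1 \setminus \{1\}$ from $\{2, \ldots, n+1\}$.

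The crux of the proof is the weight calculation. Unlike the tree-side argument behind~\eqref{eq:recursion}, where $(\inv, \ninv)$ decomposes cleanly along sub-trees rooted at children of $0$, the statistics $(\AL, \AU)$ depend on the \emph{original} vertex labels in $\{0, \ldots, n+1\}$, while $\AL(f_1)$ and $\AU(f_1)$ are computed with the relabelled labels in $\{0, \ldots, i\}$. The relabelling of $V_1$ introduces shifts in the lower and upper sequences that depend nontrivially on $V_1$ and on the position $\ell \in \{0, \ldots, i\}$ of the attachment vertex (the endpoint of $e^*$ in $V_1$) within the sorted order. I expect that these shifts, combined with the direct $(\AL, \AU)$-contribution of $e^*$ itself, collapse across the $i+1$ possible attachment positions into the factor $t \qt{i+1}$: concretely, attachment at position $\ell$ should produce a monomial of the form $t^{i+1-\ell} q^\ell$, so that summing over $\ell$ yields $t(t^i + t^{i-1}q + \cdots + q^i) = t \qt{i+1}$. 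Verifying this collapse — the main obstacle — would then complete the proof of the recursion and hence of the exponential identity.
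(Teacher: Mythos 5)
Your reduction of the theorem to the recursion $F_{n+1}(q,t)=\sum_{i=0}^n\binom{n}{i}\,t\,(t^i+t^{i-1}q+\cdots+q^i)\,F_i(q,t)\,F_{n-i}(q,t)$ is fine, but the proposed combinatorial proof of that recursion has a genuine gap, and its bookkeeping is mis-specified at exactly the point you defer. In the $\s_{n+1}$-diagram of $f\in\F_{n+1}$ the vertices sit at the fixed positions $0,1,\ldots,n+1$ along the axis and the tree is drawn without crossings (Theorem~\ref{thm:gy}). Consequently the two components of the diagram minus $e^*$ form a noncrossing partition of $\{0,1,\ldots,n+1\}$, so the component $V_1$ containing $1$ but not $0$ is forced to be the interval $\{1,\ldots,i+1\}$: if $a<b<c$ with $a,c\in V_1$ and $b\in V_0$, the tree path from $b$ to $0$ would have to cross, or share a vertex with, the path from $a$ to $c$. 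Hence there is no $\binom{n}{i}$-fold choice of $V_1\setminus\{1\}$; that freedom simply does not exist in this model. The data you would actually need to record --- and which genuinely carries the binomial coefficient --- is the distribution of the $n+1$ edge labels (i.e.\ the positions of the transpositions in the word) among $e^*$, the edges of $A_1$, and the edges of $A_0$; without it the assignment $f\mapsto(V_1,f_0,f_1,\ell)$ cannot be inverted, and with it the increasing-rotator condition imposes further constraints you would still have to analyze. In addition, the attachment vertex of $e^*$ is not a free parameter running over $i+1$ values: an arc from $0$ to $v$ crosses every arc of $A_1$ nesting $v$, so $v$ is confined to vertices of $A_1$ not covered by any of its arcs, and the hoped-for collapse to $t(t^i+\cdots+q^i)$ by attachment position cannot hold as stated. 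Since this weight computation is also the step you explicitly leave unverified, the argument is incomplete at its crux.

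For contrast, the paper proves the exponential identity directly rather than its differentiated form: the arch diagram is cut along all of its caps into simple pieces (factorizations containing the factor $(0\,m)$), and the exponential formula handles precisely the edge-label sets that your sketch omits (Proposition~\ref{thm:exponential}); the factor $t(t^{n-1}+t^{n-2}q+\cdots+q^{n-1})$ then arises not from a spatial attachment position but from the temporal position $k$ of the factor $(0\,n)$ inside a simple factorization, via the rotation bijection $\phi_k$ and the identities $\AL(f)=\AL(\phi_k(f))+k-1$ and $\AU(f)=\AU(\phi_k(f))+n-k+1$ (Proposition~\ref{thm:breakdown}). If you repair your route by tracking the edge-label sets and the cap structure, you will essentially be reconstructing that argument.
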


We now prove Theorem \ref{thm:mainthm} in two stages. We first show that its exponential nature reflects a canonical splitting of factorizations $f \in \F_n$ into sets of ``simple'' factorizations. We then describe a correspondence between simple factorizations in $\F_n$ and arbitrary factorizations in $\F_{n-1}$. 

Say a factorization $f \in \F_n$ is \emph{simple} if it contains the factor $(0\,n)$.  
Let $\hatfset_{n}$ be the set of simple factorizations in $\F_n$ and define
\begin{equation*}
	\hatffunc_n(q,t) := \sum_{\fact \in \hatfset_{n}} t^{\AU(\fact)} q^{\AL(\fact)}.
\end{equation*}
Evidently $f$ is simple if and only if  the left- and rightmost vertices of $\arch{f}$  are joined by an edge, so we say an arch diagram is \emph{simple} if it has this property. Let $\hatarchset{n}=\arch{\hatfset_n}$ be the set of all simple diagrams in $\archset{n}$. 

A \emph{cap} of an arch diagram is an edge that is not nested within any other edge.    For example, the 
diagrams in Figure \ref{fig:archfact} each have two caps, with labels 3 and
4. Clearly every arch diagram has at least one
cap, and $A \in \archset{n}$ is simple if and only if it has a unique cap.

\begin{proposition}
	$F(x;q,t) = \exp \left( \sum_{n \geq 1} \hatffunc_n(q,t) \frac{x^n}{n!} \right).$
\label{thm:exponential}
\end{proposition}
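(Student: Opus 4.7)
The plan is to prove Proposition~\ref{thm:exponential} by exhibiting a weight-preserving bijection
\[
\F_n \;\longleftrightarrow\; \bigsqcup_{\pi} \prod_{B \in \pi} \hatfset_{|B|},
\]
where $\pi$ ranges over set partitions of the label set $\{1, \ldots, n\}$, and then invoking the classical exponential formula for EGFs, which will convert $F_n(q,t) = \sum_\pi \prod_{B \in \pi} \hatffunc_{|B|}(q,t)$ into the claimed identity for generating series.

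The forward map will be the cap decomposition of $\arch{f}$. Since its caps form a left-to-right sequence $c_1,\ldots,c_k$ and each edge of $\arch{f}$ lies beneath a unique outermost cap, grouping edge labels by their governing cap produces a set partition $\pi = \{B_1,\ldots,B_k\}$ of $\{1,\ldots,n\}$. The sub-arch-diagram under $c_j$ spans a consecutive range $[v_{j-1},v_j]$ of $|B_j|+1$ vertices, and translating these vertices to start at $0$ while relabeling edge labels by their rank within $B_j$ produces a simple factorization $g_{B_j} \in \hatfset_{|B_j|}$. Weight additivity $\AL(f) = \sum_j \AL(g_{B_j})$, and similarly for $\AU$, will follow from the combinatorial identity
\[
\binom{n}{2} = \sum_{j=1}^k \binom{|B_j|}{2} + \sum_{j=1}^k |B_j| \, v_{j-1},
\]
which telescopes via $\binom{v_j}{2} - \binom{v_{j-1}}{2} = \binom{v_j - v_{j-1}}{2} + (v_j - v_{j-1})v_{j-1}$.

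I expect the main obstacle to be constructing the inverse map and verifying that it produces a valid element of $\F_n$. Given a pair $(\pi, \{g_B\})$, write $\ell_B \in B$ for the image of the cap-edge label of $g_B$ under the order-preserving bijection $[1,|B|] \to B$. The spatial ordering of caps should be the unique one for which $\ell_{B_1} > \ell_{B_2} > \cdots > \ell_{B_k}$: at the junction vertex shared by consecutive caps $c_j$ and $c_{j+1}$, the counter-clockwise rotator traverses the right-going arches into $c_{j+1}$ (ending with its cap edge, labeled $\ell_{B_{j+1}}$) and then the left-going arches into $c_j$ (beginning with its cap edge, labeled $\ell_{B_j}$), so Theorem~\ref{thm:gy} demands $\ell_{B_{j+1}} < \ell_{B_j}$. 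Since the $\ell_B$'s lie in disjoint blocks and are therefore distinct, this ordering is uniquely determined. Placing each $g_{B_j}$ into the span $[v_{j-1},v_j]$ with the appropriate shift and relabeling then yields an arch diagram whose internal rotators are inherited from the $g_{B_j}$'s and whose junction rotators are handled by the ordering rule; by Theorem~\ref{thm:gy} this corresponds to a unique $f \in \F_n$. Summing weights over $\F_n$ through this bijection and invoking the exponential formula will conclude the proof.
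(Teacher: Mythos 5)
Your proposal is correct and follows essentially the same route as the paper: decompose $\arch{f}$ by its caps into simple arch diagrams indexed by a set partition of $[1,n]$, recover the left-to-right order in the inverse map from the forced decreasing order of cap labels (via increasing rotators, Theorem~\ref{thm:gy}), check additivity of $\AL$ and $\AU$ under the vertex shifts, and invoke the exponential formula. Your telescoping identity for $\binom{n}{2}$ is just a repackaging of the paper's shift computation with $\delta_j$, so nothing essential differs.
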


\begin{proof}
We employ a natural  decomposition of noncrossing arch diagrams into  sets of simple diagrams. Figure~\ref{fig:decomposition} illustrates the mechanics of the proof.
	\begin{figure}[t]
		\centering
		\includegraphics[width=.9\textwidth]{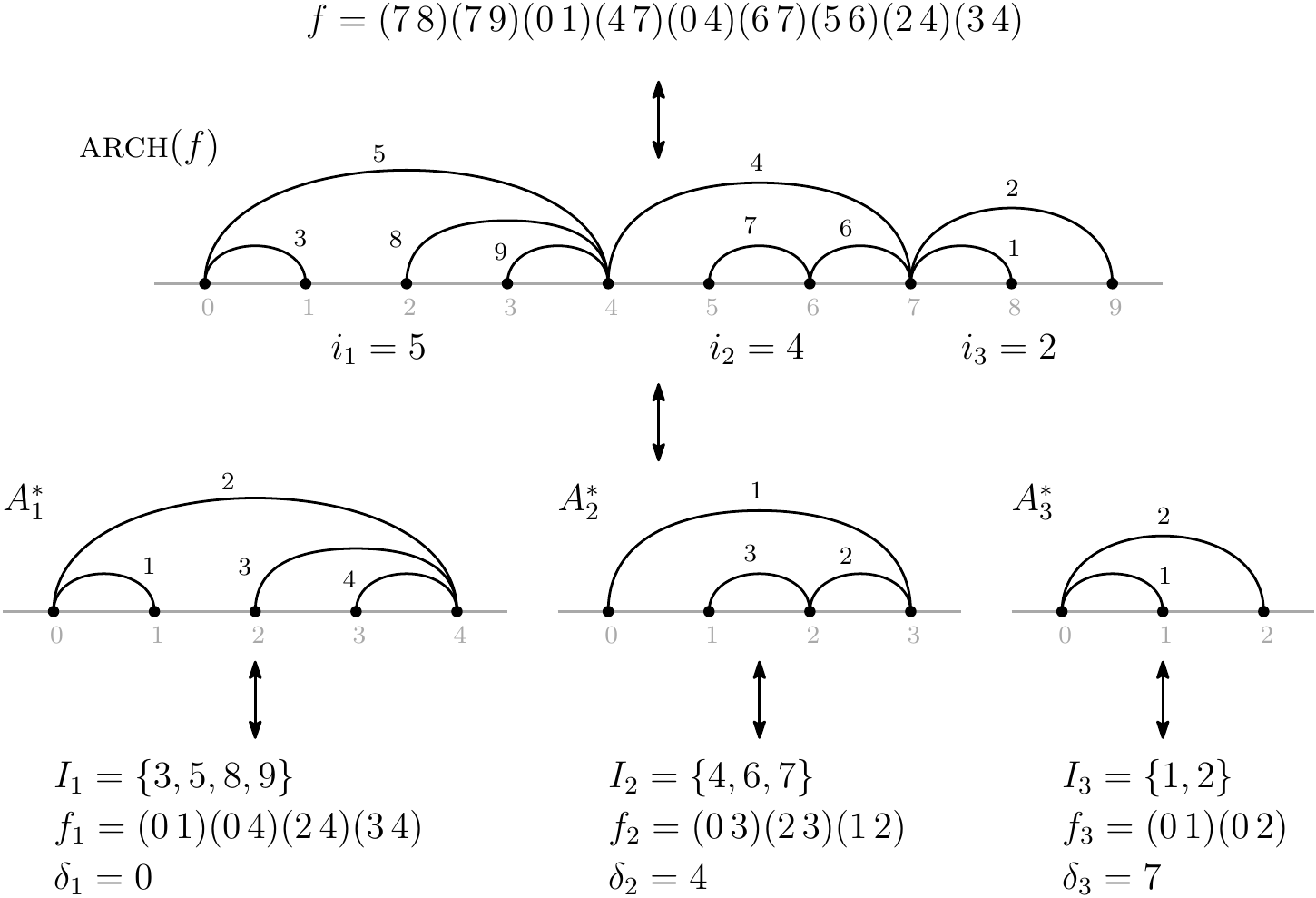}
		\caption{An illustration of the proof of
Proposition~\ref{thm:exponential}. The factorization $f \in \F_9$ naturally decomposes via
its arch diagram into simple factorizations $f_j \in \hatfset_{|I_j|}$, $j=1,2,3$, where
$\{I_1,I_2,I_3\}$  partitions $[1,9]$.  } 
		\label{fig:decomposition}
	\end{figure}

Let $f \in \F_n$, where $n \geq 1$, and let $A = \arch{f}$.
Let $e_1,\ldots,e_k$ be the caps of $A$ and let $i_1,\ldots,i_k$ be their labels.
Then $\{e_1,\ldots,e_k\}$ is the edge-set of a path from the leftmost to the
rightmost vertex of $A$. For  $j \in [1,k]$,  let $A_j$ be the arch diagram induced by all
edges of $A$ nested under $e_j$, including $e_j$ itself. Evidently $A_j$ is simple, with
$e_j$ being  its unique cap. Preserve the edge labels of $A_j$ in the set $I_j$, and then
replace the $i$-th largest label of $A_j$ with $i$ (for all $i$) to get an
arch diagram $A_j^*$ with edge labels  $\{1, 2, \ldots, |I_j|\}$.

Thus we have a decomposition $A \mapsto \{(A_1^*,I_1),\ldots,(A_k^*,I_k)\}$,  where
$\{I_1,\ldots,I_k\}$ is a partition of $[1,n]$ and $A_j^* \in
\hatarchset{|I_j|}$. We claim this is reversible.
Indeed,  $A_j$ and its original cap label $i_j$ are easily reconstructed from $(A_j^*, I_j)$, while increasing rotators force the $A_j$'s to be concatenated in decreasing left-to-right order of their cap labels $i_j$.

Let $f_j = \FACT{A_j^*} \in \hatfset_{|I_j|}$. We claim that $\AL(f) = \sum_j \AL(f_j)$ and $\AU(f) = \sum_j \AU(f_j)$. The result then follows from Theorem~\ref{thm:gy} and the exponential formula~\cite[Corollary 5.1.6]{stanenum2}.

To prove the claim, assume without loss of generality that $i_1 > \cdots > i_k$, which means the $A_j$'s are concatenated in left-to-right order $A_1,\ldots,A_k$ to form $A$. Let $\delta_j = |I_1|+\cdots+|I_{j-1}|$, with $\delta_1=0$. Then the $r$-th vertex of $A_j$ (counting from the left) corresponds to the $(r+\delta_j)$-th vertex of $A$. Thus each factor $(a\,b)$ of $f_j$ corresponds to a factor $(a+\delta_j\; b+\delta_j)$ of $f$.   Letting $L(\cdot)_i$ denote the $i$-th term of the lower sequence $L(\cdot)$, we have
\begin{align*}
\sum_{j=1}^k \AL(f_j)
&= \sum_{j=1}^k \sum_{i=1}^{|I_j|} \Big( i -1 - L(f_j)_i \Big) \\
&= \sum_{j=1}^k \sum_{i=1}^{|I_j|} (\delta_j+i-1) - 
\sum_{j=1}^k \sum_{i=1}^{|I_j|} (L(f_j)_i+\delta_j)  \\
&= \sum_{i=1}^n (i-1) - \sum_{i=1}^n L(f)_i  \\
&= \AL(f).
\end{align*}
A similar computation shows $\AU(f) = \sum_j \AU(f_j)$.  
\end{proof}

\begin{lemma}
	Let $\fact = (\tau_1, \dots, \tau_n) \in \hatfset_{n}$. Then there is a unique $k$ such that $\tau_k = (0\,n)$. Moreover, $\t_k$  is the rightmost factor moving 0 and the leftmost factor moving $n$.
	\label{lem:moz}
\end{lemma}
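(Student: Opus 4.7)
The plan is to extract all three claims from the local structure of the arch diagram $A := \arch{f} \in \hatarchset{n}$, whose existence is guaranteed by Theorem~\ref{thm:gy}. Since $A$ is a tree, the unordered pair $\{0,n\}$ supports at most one arch; simplicity of $f$ forces it to support exactly one, yielding the uniqueness of the index $k$ with $\t_k=(0\,n)$.

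For the remaining two claims I would read off the rotators at the two extreme vertices. At the leftmost vertex $0$ every arch extends eastward, and the arches are stacked near $0$ with longer arches lying above shorter ones. A counter-clockwise sweep about $0$ starting from the axis therefore meets these arches in order of increasing far endpoint, and the increasing-rotator condition from Theorem~\ref{thm:gy} forces the edge labels to increase in this same order. In particular the arch $(0\,n)$, reaching the farthest possible vertex, carries the \emph{largest} label at $0$, so $\t_k$ is the rightmost factor moving $0$. A mirror-image argument at the rightmost vertex $n$ shows that its rotator visits westward arches in order of \emph{decreasing} far endpoint position, making $(0\,n)$ the first arch encountered and hence of smallest label at $n$; thus $\t_k$ is the leftmost factor moving $n$.

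The one point that requires some care is the planar-geometric claim behind both of these orderings: at an extreme vertex all semicircular arches share a common vertical tangent, yet they still split linearly in the counter-clockwise rotator according to the position of their other endpoint. I would justify this by intersecting each arch with an infinitesimal circle of radius $\epsilon$ about the vertex; the arch whose other endpoint lies at axial distance $d$ meets this circle at angular displacement of order $\epsilon/d$ from the vertical, so arches with larger $d$ sit closer to vertical. This places longer arches \emph{later} in the counter-clockwise sweep at vertex $0$ and \emph{earlier} at vertex $n$, after which the lemma follows directly from the increasing-rotator condition.
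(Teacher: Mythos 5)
Your argument is correct and takes essentially the same route as the paper's (much terser) proof: uniqueness of $k$ because $G(f)$ is a tree, and the two ordering claims from the increasing-rotator condition at the leftmost and rightmost vertices of $\arch{f}$, with your $\epsilon/d$ tangency estimate simply filling in the geometric detail the paper leaves implicit. One minor wording slip: at vertex $n$ the westward arches are met in order of decreasing \emph{distance} to the far endpoint (equivalently, increasing axial position), not decreasing position, but your stated conclusion and the supporting computation are the correct ones.
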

\begin{proof}
The uniqueness of $\t_k$ is clear, since $\graph{f}$ is a tree.  The other assertions follow  from the fact that the rotators of  the left- and rightmost vertices of $\arch{f}$ are increasing.
\end{proof}

\begin{proposition}
\label{thm:breakdown}
$\hat{F}_n(q,t) = t \qt{n} F_{n-1}(q,t)$.
\end{proposition}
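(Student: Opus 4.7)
The plan is to partition $\hat{\F}_n$ according to the position $k \in [1,n]$ of the cap factor $(0\,n)$ within a simple factorization $f = (\tau_1,\ldots,\tau_n)$. For each such $k$ I will construct a bijection $\Psi_k$ between $\{f \in \hat{\F}_n : \tau_k = (0\,n)\}$ and $\F_{n-1}$ under which $\AL$ shifts by $k-1$ and $\AU$ shifts by $n-k+1$. Since Lemma~\ref{lem:moz} assigns a unique such $k$ to each $f \in \hat{\F}_n$, summing $q^{k-1}t^{n-k+1}F_{n-1}(q,t)$ over $k \in [1,n]$ will then yield
\[
\hat{F}_n(q,t) \;=\; t\,\qt{n}\,F_{n-1}(q,t).
\]

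To define $\Psi_k$, let $\phi: [1,n] \to [0,n-1]$ be the shift $x \mapsto x-1$, acting componentwise on transpositions in $\Sym{[1,n]}$. Given $f = (\tau_1,\ldots,\tau_{k-1},(0\,n),\tau_{k+1},\ldots,\tau_n)$, Lemma~\ref{lem:moz} places each $\tau_i$ with $i<k$ in $\Sym{[0,n-1]}$ and each $\tau_i$ with $i>k$ in $\Sym{[1,n]}$, so I can set
\[
\Psi_k(f) \;:=\; \bigl(\phi(\tau_{k+1}),\ldots,\phi(\tau_n),\,\tau_1,\ldots,\tau_{k-1}\bigr),
\]
a tuple of $n-1$ transpositions in $\Sym{[0,n-1]}$. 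The main technical step is to verify that its product equals $\s_{n-1}$. Writing $\pi = \tau_1\cdots\tau_{k-1}$ (which fixes $n$) and $\rho = \tau_{k+1}\cdots\tau_n$ (which fixes $0$), the product of $\Psi_k(f)$ is $\phi\rho\phi^{-1}\cdot\pi$, where the conjugate acts on $[0,n-1]$ by $x \mapsto \rho(x+1)-1$. Applying this to an arbitrary $x$ and invoking $\pi(0\,n)\rho = \s_n$ --- split into the cases $\pi(x) = 0$ and $\pi(x) \ne 0$ --- reduces the claim to identifying the unique $x^* \in [0,n-1]$ with $\pi(x^*)=0$ and verifying two inverse-image identities for $\rho$. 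The inverse of $\Psi_k$ is the parallel construction: for $g = (\sigma_1,\ldots,\sigma_{n-1}) \in \F_{n-1}$, take $\Psi_k^{-1}(g) = (\sigma_{n-k+1},\ldots,\sigma_{n-1},(0\,n),\phi^{-1}(\sigma_1),\ldots,\phi^{-1}(\sigma_{n-k}))$.

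Tracking the areas is then a direct bookkeeping exercise. The cap $\tau_k = (0\,n)$ contributes $0$ to $\sum L$ and $n$ to $\sum U$, while each of the $n-k$ applications of $\phi$ lowers both the lower and the upper entry of the corresponding transposition by $1$. Thus $\sum L(f) = \sum L(g) + (n-k)$ and $\sum U(f) = \sum U(g) + (2n-k)$, so subtracting $\binom{n}{2}$ and $\binom{n-1}{2}$ as appropriate yields $\AL(f) = \AL(g) + (k-1)$ and $\AU(f) = \AU(g) + (n-k+1)$. Summation over $k \in [1,n]$ then completes the argument.

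The main obstacle is the product calculation showing $\Psi_k(f) \in \F_{n-1}$: one must carefully track how the conjugate $\phi\rho\phi^{-1}$ and the permutation $\pi$ combine, and this relies crucially on Lemma~\ref{lem:moz} to guarantee that $\pi$ fixes $n$ and $\rho$ fixes $0$. The remaining area computation and the invertibility of $\Psi_k$ are essentially formal consequences.
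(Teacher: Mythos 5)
Your proposal is correct and takes essentially the same route as the paper: the same partition of $\hatfset_n$ by the position $k$ of $(0\,n)$ via Lemma~\ref{lem:moz}, the same rotate-and-remove map (your componentwise shift $\phi$ coincides with the paper's conjugation $\bar{\tau}=\s_n\tau\s_n^{-1}$ precisely because, by Lemma~\ref{lem:moz}, the factors after position $k$ avoid $0$), and the same area bookkeeping giving the shifts $k-1$ and $n-k+1$. The only difference is that where you sketch a pointwise case analysis (on $\pi(x)=0$ versus $\pi(x)\neq 0$) to verify the rotated product equals $\s_{n-1}$ --- a computation that does go through --- the paper dispatches this step in one line by writing $\a=\t_1\cdots\t_k$, $\b=\t_{k+1}\cdots\t_n$, noting $\s_n=(\s_n\b\s_n^{-1})\a$, and right-multiplying by $(0\,n)$ together with $\s_n(0\,n)=\s_{n-1}$.
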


\newcommand{\conj}[1]{\bar{#1}}

\begin{proof}
For $k \in [1,n]$, let $\hatfset_{n,k}$ be the subset of $\F_n$ containing factorizations whose $k$-th factor is $(0\,n)$. Lemma~\ref{lem:moz} shows $\{\hatfset_{n,1},\ldots,\hatfset_{n,n}\}$ is a partition of $\hatfset_n$.

Define the function $\phi_k$ on $\hatfset_{n,k}$ by
\[
	\phi_k(\t_1,\ldots,\t_n) = (\conj{\t}_{k+1},\ldots,\conj{\t}_n,\t_1,\ldots,\t_{k-1}),
\]
where $\conj{\t} =  \s_n \t \s_n^{-1}$ denotes the conjugate of $\t$ under $\s_n$.  
The effect of $\phi_k$ is to ``rotate'' a factorization $f \in \hatfset_{n,k}$ until  $(0\,n)$ is the rightmost factor, at which point this factor is removed.
We claim that $\phi_k$ is bijection from $\hatfset_{n,k}$ to $\F_{n-1}$, and that
\begin{equation}
\begin{aligned}
\label{eq:lowerupper}
	\AL(\fact) &= \AL(\phi_k(\fact))+k-1 \\
	\AU(\fact) &= \AU(\phi_k(\fact))+n-k+1. 
\end{aligned}
\end{equation}

We first verify $\phi_k(\hatfset_{n,k}) \subseteq \F_{n-1}$.  Let $\fact=(\t_1,\ldots,\t_n) \in \hatfset_{n,k}$, so that $\t_1\cdots\t_n = \s_n$ and $\t_k=(0\,n)$. Taking $\a = \t_1 \cdots \t_k$ and $\b = \t_{k+1} \cdots \t_n$, we have $\s_n^{-1} = \b^{-1}\a^{-1}$ and  therefore
\[
	\s_n = \s_n \beta(\beta^{-1}\alpha^{-1})\alpha = (\s_n \b \s_n^{-1})\alpha =\conj{\t}_{k+1} \cdots \conj{\t}_n \t_1 \cdots \t_k.
\] 
Multiplying  on the right by $\t_k=(0\,n)$ gives $\conj{\t}_{k+1} \cdots \conj{\t}_n \t_1 \cdots \t_{k-1}= \s_n(0\,n) = \s_{n-1} $.    Thus $\phi_k(f)$ is a factorization of $\s_{n-1}$ into $n-1$ transpositions; \emph{i.e.} $\phi_k(f) \in \F_{n-1}$. 

The argument above is easily reversed to confirm the inverse of $\phi_k$ is given by 
\[
\phi_k^{-1}(\t_1,\ldots,\t_{n-1}) = (\t_{n-k+1},\ldots,\t_{n-1}, (0\,n),\t_1',\ldots,\t_{n-k}'),
\]
where $\t' =\s_n^{-1} \t \s_n$.  Therefore $\phi_k$ is bijective.

Now suppose the factors of $\fact=(\t_1,\ldots,\t_n) \in \hatfset_{n,k}$ are $\t_i = (a_i\,b_i)$, with $a_i < b_i$.    Lemma~\ref{lem:moz} ensures $a_i > 0$  for $i > k$, whence $\conj{\t_i}= (a_i-1\;b_i-1)$. That is,
\begin{equation}
	\label{eq:rotate}
  \begin{aligned}
	L(\phi_k(\fact)) &= (a_{k+1}-1,\ldots,a_n-1,a_1,\ldots,a_{k-1}) \\
	U(\phi_k(\fact)) &= (b_{k+1}-1,\ldots,b_n-1,b_1,\ldots,b_{k-1}).
  \end{aligned}
  \end{equation}
Since $a_k=0$, we have
  \begin{align*}
\AL(\phi_k(\fact)) &= \binom{n-1}{2}-\Big(\sum_{i=1}^n a_i-(n-k)\Big) \\
	&=\binom{n}{2}-\sum_{i=1}^n a_i -k + 1  \\
	&= \AL(\fact)-k+1.
  \end{align*} 
Similarly, since $b_k=n$,
  \begin{equation*}
	\AU(\phi_k(\fact)) = \Big(\sum_{i=1}^n b_i - (n-k)-n\Big) - \binom{n-1}{2} 
		 = \AU(\fact)-n+k-1.
  \end{equation*}
This establishes~\eqref{eq:lowerupper}, from which there follows
  \begin{equation*}
	\sum_{\fact \in \hatfset_{n,k}} t^{\AU(\fact)} q^{\AL(\fact)}
	= q^{k-1} t^{n-k+1} \sum_{\fact' \in \F_{n-1}} t^{\AU(\fact')} q^{\AL(\fact')}.
  \end{equation*}
Summing  over $k \in [1,n]$ completes the proof.
\end{proof}

Propositions~\ref{thm:exponential} and~\ref{thm:breakdown} together complete the proof of Theorem~\ref{thm:mainthm}.  We note it is straightforward to use the arguments in this section to define a recursive bijection between $\F_n$ and $\T_n$ preserving the relevant statistics.

\section{Special Cases and Related Results}
\label{sec:special}

An interesting special case is the terms of highest degree in
$F_n(q,t)$;  that is, the factorizations with maximum total difference.
\newcommand{\fmax}{\F^\textnormal{max}}
Let $\fmax_n \subseteq \F_n$ be the set of factorizations with maximum total difference.
While from the point of view of factorizations it is \emph{a priori}
unclear what this maximum total difference is and what factorizations have it, from the point of view of tree inversions this is clear.  The terms
of maximum degree in $I_n(q,t)$ correspond to trees with maximum total
depth.  Such trees are clearly paths, and therefore correspond to permutations, and have
total depth $1 + 2 \cdots + n =
\binom{n+1}{2}$.  Furthermore, the
notions tree inversions/coinversions for paths correspond to the usual notions of
inversions/coinversions for permutations.   Thus, the terms of maximum degree in
$I_n(q,t)$ are in fact given by the extensively studied \emph{inversion polynomial
for permutations} (see Stanley \cite[Section 1.3]{stanenum1}).  This gives us the following
corollary.
\begin{corollary}\label{cor:perminv}
	\begin{equation*}
		\sum_{f \in \fmax_n} q^{\AL(f)} t^{\AU(f)} = t^n \prod_{i=1}^{n} \frac{t^i - q^i}{t-q}.
	\end{equation*}
\end{corollary}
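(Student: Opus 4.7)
The plan is to reduce the identity to a classical bivariate $q,t$-inversion formula for permutations by passing through Theorem~\ref{thm:mainresult}.  Since $\fmax_n$ is defined as the set of factorizations with maximum total difference $\AL(f)+\AU(f)$, and Theorem~\ref{thm:mainresult} tells us that $(\AL,\AU)$ on $\F_n$ is equidistributed with $(\inv,\ninv)$ on $\T_n$, I first rewrite
\[
	\sum_{f \in \fmax_n} q^{\AL(f)} t^{\AU(f)}
	= \sum_{T} q^{\inv(T)} t^{\ninv(T)},
\]
where the sum on the right runs over trees $T \in \T_n$ realizing the maximum value of $\inv(T)+\ninv(T)=\tdep(T)$.

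Next I identify the maximizers explicitly.  As the excerpt points out, trees of maximum total depth are precisely the paths rooted at $0$, and each such path is recorded by the permutation $\pi=(\pi_1,\ldots,\pi_n)$ of $[1,n]$ that lists the vertices in order from the child of the root down to the leaf.  For a path tree $T_\pi$, every pair of vertices $i,j \in [1,n]$ with $i$ closer to the root than $j$ contributes either an inversion (if the root-to-leaf order places the larger before the smaller) or a coinversion.  Matching this against the usual permutation statistics yields
\[
	\inv(T_\pi) = \inv(\pi), \qquad \ninv(T_\pi) = n + \ninv(\pi),
\]
the extra $n$ accounting for the $n$ coinversions $(0,\pi_i)$ contributed by the root, which is smaller than every other vertex.

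Substituting these into the previous identity gives
\[
	\sum_{f \in \fmax_n} q^{\AL(f)} t^{\AU(f)}
	= t^n \sum_{\pi \in \Sym{[1,n]}} q^{\inv(\pi)} t^{\ninv(\pi)}.
\]
The final step is to invoke the classical bivariate inversion enumerator for $\Sym{n}$, which (for example via the usual insertion argument building $\pi$ one letter at a time, each insertion contributing a factor $t^{i-1}+t^{i-2}q+\cdots+q^{i-1}$) gives
\[
	\sum_{\pi \in \Sym{n}} q^{\inv(\pi)} t^{\ninv(\pi)}
	= \prod_{i=1}^n \left(t^{i-1}+t^{i-2}q+\cdots+q^{i-1}\right)
	= \prod_{i=1}^n \frac{t^i - q^i}{t-q}.
\]
This matches the claimed right-hand side.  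There is no real obstacle here: the whole point is that the corollary is essentially a ``reading off'' of the top-degree terms in Theorem~\ref{thm:mainresult}, combined with the well-known permutation statistic identity; the only care needed is the correct bookkeeping of the $n$ coinversions coming from the root in the tree-to-permutation translation.
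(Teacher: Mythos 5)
Your proposal is correct and follows essentially the same route as the paper: identify the top-degree terms of $F_n(q,t)=I_n(q,t)$ via Theorem~\ref{thm:mainresult}, observe that trees of maximum total depth are paths corresponding to permutations, and invoke the classical bivariate inversion enumerator, with the factor $t^n$ coming from the root's coinversions. Your write-up simply makes explicit the bookkeeping ($\inv(T_\pi)=\inv(\pi)$, $\ninv(T_\pi)=n+\ninv(\pi)$) that the paper leaves as a prose sketch.
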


\newcommand{\rarrowf}{\overrightarrow{\F}}
\newcommand{\larrowf}{\overleftarrow{\F}}
\newcommand{\rarrowfs}{\overrightarrow{F}}
\newcommand{\larrowfs}{\overleftarrow{F}}

Other special cases can obtained by looking at factorizations in
$\F_{n}$ with restrictions placed on
$L(\fact)$.
\newcommand{\fperm}{\F_n^{\mathrm{perm}}}
For example, let $\fperm$ be the subset of $\fact \in \F_n$ where $L(\fact)$ is a
permutation, and construct the series
\begin{equation*}
	F_n^\mathrm{perm}(q,t) = \sum_{\fact \in \fperm} t^{\AU(\fact)}
	q^{\AL(\fact)}.
\end{equation*}
It is clear that any $\fact \in \F_n$ has $\AL(\fact) = 0$ if and
only if $L(\fact)$ is a permutation.  It follows that
$F_n^\mathrm{perm}(q,t)$ is a polynomial in $t$ and is equal
to $F_n(0,t) = I_n(0,t)$.  The latter polynomial is evidently the total
depth polynomial for \emph{increasing trees};  that is, trees with no
inversions.

We can, of course, restrict our attention to other sets.  An \emph{increasing} parking function $(a_1, \dots, a_n)$ is a parking function
with $a_i \leq a_{i+1}$ for all $i \geq 1$.  \emph{Decreasing} parking functions
are analogously defined.  Both sets of parking functions are enumerated
by the Catalan numbers, as they both have clear interpretations as Dyck paths.
However, when these sets of parking functions are viewed as the lower sequences of
factorizations, the upper sequence has some interesting properties.  We call
factorizations in $\F_n$ with increasing and decreasing lower sequences \emph{increasing} and
\emph{decreasing factorizations} of $\sigma_n$, respectively, and denote their respective sets by
$\rarrowf_n$ and $\larrowf_n$.  Decreasing factorizations of the
canonical full cycle have been studied
previously in \cite{gewurzmerola}.  We note that the 
authors there regard their factorizations as increasing since they multiply
permutations from right to left.  We will use and revisit their
results in the proof of Theorem \ref{thm:notfancyqt}.

For $n \geq 1$ define the series $C_n(q,t)$ by
\begin{equation}\label{eq:adinroich}
	C_{n}(q,t) := \sum_{k=0}^{n-1} q^k t^{n-k-1} C_k(q,t)
	C_{n-k-1}(q,t),\; \text{with $C_0(q,t) = 1$.}
\end{equation}
The polynomials $C_n(q,t)$ are natural generalizations of Catalan
numbers, and were recently considered in a similar context by Adin and Roichman
\cite{adinroich}, where the authors are determining the radius of the
\emph{Hurwitz graph} of the symmetric groups.  
Finally, let $\rarrowfs_n(q,t) = \sum_{\fact \in \rarrowf_n} q^{\AL(\fact)}
t^{\AU(\fact)}$ and
$\larrowfs_n(q,t) = \sum_{\fact \in \larrowf_n} q^{\AL(\fact)} t^{\AU(\fact)}$

\begin{theorem} \label{thm:notfancyqt}
	For all $n \geq 1$,  the series $\rarrowfs_n(q,t)$ satisfies
	\begin{equation*}
		\rarrowfs_{n}(q,t) = \sum_{k=0}^{n-1} q^k t^{n-k}
		\rarrowfs_k(q,t) \rarrowfs_{n-k-1}(q,t), \mathrm{ with }\;
		\rarrowfs_0(q,t) = 1,
	\end{equation*}
	whence, $\rarrowfs_{n}(q,t) = t^n C_n(q,t)$.  Also, for all $n \geq 0$,
	$\larrowfs_n(q,t)=t^n C_n(q,1)$.
\end{theorem}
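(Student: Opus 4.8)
The plan is to establish the recursion for $\rarrowfs_n(q,t)$ via a canonical decomposition of increasing factorizations, entirely parallel to the cap-decomposition used in the proof of Proposition~\ref{thm:exponential}, and then to deduce $\rarrowfs_n(q,t)=t^nC_n(q,t)$ by matching with~\eqref{eq:adinroich}. Finally the decreasing case will follow by a symmetry/specialization argument, using the results of \cite{gewurzmerola}.

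First I would set up the decomposition. Take $f\in\rarrowf_n$ and look at its arch diagram $A=\arch{f}$; since $L(f)=(a_1,\dots,a_n)$ is increasing, the leftmost vertex of $A$ (which sits at height $0$, i.e. it is vertex $0$ of $\s_n$) has some distinguished structure. The key point to nail down is that an increasing lower sequence forces the cap incident to the leftmost vertex to be ``small'': I expect that $f\in\rarrowf_n$ iff in the cap-path $e_1,\dots,e_k$ from the leftmost to the rightmost vertex, the first cap $e_1$ is the edge labelled $1$ joining vertex $0$ to vertex $1$ of $\s_n$, and moreover the arch diagram splits at vertex $1$ into a left block and a right block which are themselves increasing factorizations of smaller canonical cycles. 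This gives a bijection $\rarrowf_n \leftrightarrow \bigsqcup_{k=0}^{n-1}\rarrowf_k\times\rarrowf_{n-1-k}$, where $k$ records the size of the block ``under'' the first cap. Tracking $\AL$ and $\AU$ through this split — using the coordinate-shift computation exactly as in Proposition~\ref{thm:breakdown}, equations~\eqref{eq:lowerupper}--\eqref{eq:rotate} — should yield the weight $q^k t^{n-k}$ attached to the pair $(k,n-1-k)$: the factor $t^{n-k}$ comes from the single cap edge contributing to the upper area plus the reindexing of one block, and the $q^k$ from the lower-area shift. This produces precisely the stated recursion $\rarrowfs_n=\sum_{k=0}^{n-1}q^k t^{n-k}\rarrowfs_k\rarrowfs_{n-1-k}$. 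Substituting $\rarrowfs_n=t^nC_n(q,t)$ into this recursion, the powers of $t$ balance as $t^n = t^{n-k}\cdot t^k\cdot t^{n-k-1}\cdot t^{\,?}$ — one checks $t^{n-k}t^kt^{n-1-k}=t^{2n-1-k}$ against $t^n$, so in fact the clean identity is $\rarrowfs_n=\sum q^kt^{n-k-1}\rarrowfs_k\rarrowfs_{n-1-k}\cdot t$, matching~\eqref{eq:adinroich} after dividing by $t^n$; an induction on $n$ then gives $\rarrowfs_n=t^nC_n(q,t)$ from $\rarrowfs_0=1=C_0$.

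For the decreasing case, the cleanest route is to relate $\larrowf_n$ to the factorizations studied in \cite{gewurzmerola}: as the paper notes, their ``increasing'' factorizations (under right-to-left multiplication) are exactly our decreasing factorizations $\larrowf_n$. From their enumeration one extracts $|\larrowf_n\cap\{\AU=m\}|$, or equivalently the generating function $\sum_{f\in\larrowf_n}t^{\AU(f)}$. Since $L(f)$ decreasing forces $\AL(f)=0$ only when $L(f)$ is a permutation, but in general a decreasing parking function need not have all distinct entries, I would instead argue that on $\larrowf_n$ the lower area is determined and in fact $\larrowfs_n(q,t)$ is obtained from the full $q,t$ data by the substitution $q\mapsto 1$ on the ``$C$-side'': concretely, a decreasing factorization is, up to the label-reversal involution $i\mapsto n-i$ and reversal of factor order (the involution already used after Theorem~\ref{thm:bianestanley}), an increasing factorization, and under this involution $\AL\leftrightarrow\AU$; combining with the first part gives $\larrowfs_n(q,t)=t^nC_n(q,1)$ once one checks that the involution collapses the $q$-statistic as claimed.

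The main obstacle I anticipate is the first step: proving that an \emph{increasing} lower sequence is equivalent to the recursive block structure of the arch diagram, i.e.\ that $f\in\rarrowf_n$ precisely when the subdiagram under the first cap and the rest are each (rescaled) increasing arch diagrams. The forward direction (increasing $\Rightarrow$ structured) should follow from reading off $L(f)$ vertex-by-vertex along the axis and using that the increasing-rotator condition pins down the left-to-right vertex order; the reverse direction requires checking that concatenating two increasing factorizations under a fresh cap keeps the combined lower sequence increasing, which is where the $q^k$ weight and the precise split index $k$ must be reconciled. Everything after that — the area bookkeeping and the two inductions — is routine and closely mirrors Section~\ref{sec:mainthm}.
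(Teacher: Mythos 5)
Your high-level architecture (a Catalan-type cap decomposition for $\rarrowf_n$, then an appeal to Gewurz--Merola for $\larrowf_n$) matches the paper's, but the two steps you yourself flag as delicate are both wrong as stated, and each needs to be replaced by a different argument. For the increasing case, the correct structural fact is not that ``the first cap is the edge labelled $1$ joining vertex $0$ to vertex $1$''; it is that an increasing lower sequence forces $\arch{f}$ to have a \emph{unique} cap, namely the edge $\{0,n\}$, so that $(0\,n)$ is a factor of $f$ (if two adjacent caps had labels $i$ to the left of $j$, then $i>j$ while $L(f)_i<L(f)_j$, contradicting monotonicity). Your version already fails at $n=2$: for $(0\,2)(1\,2)\in\rarrowf_2$ the unique cap is $\{0,2\}$, labelled $1$, and for $(0\,1)(0\,2)\in\rarrowf_2$ the edge labelled $1$ is not a cap at all; moreover a cap joining the adjacent vertices $0$ and $1$ can have nothing nested beneath it, so your index ``$k=$ size of the block under the first cap'' could only ever be $0$ and cannot produce a bijection with $\bigsqcup_{k}\rarrowf_k\times\rarrowf_{n-1-k}$. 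The paper instead removes the unique cap $\{0,n\}$, which splits the diagram into blocks on vertices $\{0,\ldots,k\}$ and $\{k+1,\ldots,n\}$; after shifting the second block these are elements of $\rarrowf_k$ and $\rarrowf_{n-k-1}$, and the bookkeeping $\AL(f)=k+\AL(f_1)+\AL(f_2')$, $\AU(f)=n-k+\AU(f_1)+\AU(f_2')$ gives exactly the weight $q^kt^{n-k}$; the comparison with \eqref{eq:adinroich} is then the identity $t^{n-k}\cdot t^k\cdot t^{n-k-1}=t^n\cdot t^{n-k-1}$, which your power count gestures at but does not pin down.

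The decreasing case is a genuine gap, not just a detail: the involution that interchanges symbols $i$ and $n-i$ and reverses the factor order sends $L(f)=(a_1,\ldots,a_n)$ to $(n-b_n,\ldots,n-b_1)$ and $U(f)$ to $(n-a_n,\ldots,n-a_1)$, so it carries decreasing \emph{lower} sequences to decreasing \emph{upper} sequences; it does not identify $\larrowf_n$ with $\rarrowf_n$, and no swap of the statistics can rescue this, since for instance $\rarrowfs_2(q,t)=t^2(q+t)$ while $\larrowfs_2(q,t)=t^2(q+1)$, two distributions not related by exchanging $q$ and $t$. The paper's actual argument is direct: by Gewurz--Merola, $U$ maps $\larrowf_n$ bijectively onto the $132$-avoiding permutations of $\{1,\ldots,n\}$, so $\AU(f)=\binom{n+1}{2}-\binom{n}{2}=n$ is \emph{constant} on $\larrowf_n$ (it is the upper, not the lower, area that is pinned down); the lower sequences are precisely the decreasing parking functions, i.e.\ Dyck paths, whose area enumerator is $C_n(q,1)$ by Carlitz--Riordan, giving $\larrowfs_n(q,t)=t^nC_n(q,1)$ with no involution needed.
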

\newcommand{\avd}{\mathrm{a}_{132}}

\begin{proof}
	We first observe that any factorization $\fact \in
	\rarrowf_n$ contains $(0\; n)$ as a factor, and we refer the reader to
	Figure \ref{fig:decomposition} for an explanatory illustration of this.  Indeed, if
the arch diagram of a factorization $f \in \rarrowf_n$ has more than one cap (as
	defined in Section \ref{sec:mainthm}), then the cap labels 
	will be increasing from right to left.  Recall that $L(f)_i$,
	the $i$-th entry of $L(f)$,  is given by the left endpoint of
	the arch labelled $i$.  Hence, if $i$ and $j$
	are labels on two adjacent caps with $i$ to the left of $j$, then
	$i > j$ but $L(f)_i < L(f)_j$, contradicting that $f$ has an 
	increasing lower sequence.  We conclude therefore that the arch
	diagram of $f$ has only one cap, $(0\; n)$ is a factor of
	$f$,  and by Lemma \ref{lem:moz} this factor is the rightmost factor containing the
	symbol $0$.   Notice further that since $f$ is increasing, all factors containing 0
	are at the beginning of $f$.   Furthermore, these factors
	containing 0 appear in increasing order of their second
	element because of the increasing rotator condition on arch
	diagrams.

	Now suppose that $\fact \in \rarrowf_n$ and  let $A = \arch{f}$.  Removing the cap from $A$ leaves two arch diagrams,  $A_1$ and $A_2$, corresponding
	to (minimal) increasing factorizations $\fact_1$ and $\fact_2$ of $(0\; 1\; \cdots k)$ 
	and $(k+1\; \cdots n)$, respectively.  The reasoning in the
	above paragraph applies to $f_1$ and $A_1$ as well:  $A_1$ has a
	unique cap, it corresponds to a transposition $(0\; k)$ in
	$f_1$, and this transposition is the rightmost transposition of $f_1$
	containing 0.  Note that the symbol $k$ is also the second
	largest symbol next to $n$ such that $(0\; k)$ is a factor
	of $f$.
	Because $\fact$ is an increasing factorization, all the factors
	of $\fact_1$ occur to the left of the factors of $\fact_2$
	
	We therefore see that $\fact_1 \in
	\rarrowf_k$, but $\fact_2 \notin \rarrowf_{n-k-1}$; however, it is clear that
	the factorization $\fact'_2$, which is obtained from $\fact_2$ by
	subtracting $k+1$ from every element in every factor, is a minimal increasing factorization of $(0\;
	\cdots\; n-k-1)$, and so $\fact'_2 \in \rarrowf_{n-k-1}$.  
	
	The above sets up a map from $\rarrowf_n$ to $\bigcup_{k=0}^{n-1} \rarrowf_k \times
	\rarrowf_{n-k-1}$.  It is easy to see that the above construction is
	reversible.  Namely, given an $\fact_1 \in \rarrowf_k$ and $\fact_2' \in
	\rarrowf_{n-k-1}$, we can reconstruct $\fact_2$ from the length of $\fact_1$,
	and then $\fact$ is the concatenation of $\fact_1$ and $\fact_2$ with the transposition $(0\; n)$ inserted
	immediately after the rightmost factor in the concatenation with a 0.  We therefore
	have a bijection from $\rarrowf_n$ to $\bigcup_{k=0}^{n-1} \rarrowf_k \times
	\rarrowf_{n-k-1}$.  What remains is verifying that areas are preserved.
	To wit,
	\begin{align*}
		\AU(\fact) &= \sum_{i=1}^n U(\fact)_i - {n \choose 2}\\
		&= \left(\sum_{i=1}^k U(\fact_1)_i + \sum_{i=1}^{n-k-1}
		\left(U(\fact'_2)_i +
		k+1\right) + n\right) - {k \choose 2}  - \sum_{i=k}^{n-1} i \\
		&= n-k + \AU(\fact_1) + \sum_{i=1}^{n-k-1} U(\fact'_2)_i - {n-k-1 \choose 2}\\
		&= n-k + \AU(\fact_1) + \AU(\fact'_2).
	\end{align*}
	Similarly, we can show that
	\begin{equation*}
		\AL(\fact) = k + \AL(\fact_1) + \AL(\fact'_2).
	\end{equation*}
	This completes the proof that the polynomials
	$\{\rarrowfs_n(q,t)\}$ satisfy the
	recurrence stated in the theorem.   By \eqref{eq:adinroich},
	the polynomials $\{t^n C_n(q,t)\}$ satisfy the same recurrence and
	initial conditions, and so for all $n \geq 0$, $\rarrowfs_n(q,t) =
	t^nC_n(q,t)$. 
	
	The two claims about $\larrowfs_n(q,t)$ in Theorem \ref{thm:notfancyqt}
	are less novel and in fact follow easily from two previously known results. Indeed, it was shown by Gewurz and Merola
	\cite{gewurzmerola} that $U : \larrowf_n \rightarrow \avd(n)$ is a
	bijection, where $\avd(n)$ is the set of $132$-avoiding permutations in
	the symmetric group acting on the symbols $\{1, 2, \dots, n\}$.
	It is well-known that $\avd(n)$ is counted by Catalan numbers.  Thus, the upper sequences of
	 factorizations in $\larrowf_n$ are permutations and therefore each $f \in
	\larrowf_n$ has $\AU(f) =
	\sum_{i=1}^n i - \binom{n}{2} = n$.  Whence the contribution of $f$
	to $\larrowfs_{n}(q,t)$ is $t^n q^i$, where $i$ is the lower area of $f$.
	Finally, as noted above, decreasing parking functions are in fact Dyck paths,
	and the classic result of Carlitz  and Riordan \cite{carlitzriordan} gives the
	area polynomial of Dyck paths of length $n$ as $C_n(q,1)$.
	The result then follows.	
\end{proof}

\section{Factorizations and Parking Functions}
\label{sec:unimodal}

The aim of this section is to prove Theorem~\ref{thm:unimodal}. This will be done  through a sequence of propositions, beginning with the following elementary result:

\begin{proposition}\label{prop:alwayspark}
For any $\s\in \cycles{n}$, we have $\Ls(\F_\s)\subseteq \p_n$ and $\Us(\F_\s) \subseteq \M_n$.
\end{proposition}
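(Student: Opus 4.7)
The plan is to reduce both inclusions to elementary edge-counts on the graph $\graph{f}$. By Lemma~\ref{lem:minimal1}, since $\s$ is a full cycle, $\graph{f}$ is a spanning tree on $[n]$ with exactly $n$ edges $\{a_i,b_i\}$, one per factor $(a_i\,b_i)$ of $f$. Once this observation is in hand, the parking and major-sequence conditions translate directly into statements about how many edges of $\graph{f}$ meet a prescribed vertex set, so the proposition reduces to a counting lemma for spanning trees.

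For the lower sequence I would use the standard reformulation that $(a_1,\ldots,a_n) \in \p_n$ if and only if, for every $k \in [0,n-1]$, at least $k+1$ of the $a_i$ satisfy $a_i \leq k$. Because factors are written $(a_i\,b_i)$ with $a_i < b_i$, the value $a_i$ is the smaller endpoint of the corresponding edge, and $a_i \leq k$ is equivalent to the edge $\{a_i,b_i\}$ being incident to $[0,k]$. The edges violating this lie entirely in the induced subgraph of $\graph{f}$ on $[k+1,n]$, which is a forest on $n-k$ vertices and therefore has at most $n-k-1$ edges. Subtracting from the total of $n$ gives at least $k+1$ edges incident to $[0,k]$, which is exactly what is needed.

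For the upper sequence the argument is dual: $(b_1,\ldots,b_n) \in \M_n$ if and only if, for every $k \in [1,n]$, at least $n-k+1$ of the $b_i$ satisfy $b_i \geq k$, and $b_i \geq k$ is equivalent to the edge $\{a_i,b_i\}$ being incident to $[k,n]$. The complementary edges lie in the induced sub-forest on $[0,k-1]$, which has at most $k-1$ edges, leaving at least $n-k+1$ edges incident to $[k,n]$.

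I do not anticipate any genuine obstacle here. The whole argument is a consequence of $\graph{f}$ being a spanning tree on $[n]$, and the specific full cycle $\s$ is used only through Lemma~\ref{lem:minimal1} to guarantee that $\graph{f}$ is a tree rather than a proper forest. The deeper content of Theorem~\ref{thm:unimodal} — pinning down exactly which $\s$ make $\Ls$ and $\Us$ bijective — will of course require unimodality in an essential way, but for the present containment statement no such hypothesis is needed.
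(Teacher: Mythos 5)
Your proof is correct and follows essentially the same route as the paper: both invoke Lemma~\ref{lem:minimal1} to see that $\graph{f}$ is a tree, translate the parking/major conditions into counting edges meeting an initial (resp.\ final) segment of $[n]$, and bound the complementary edges by the size of the induced sub-forest. The only difference is that you spell out the dual argument for $\Us$, which the paper leaves as ``similar.''
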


Our proof of Proposition~\ref{prop:alwayspark} relies on the following well-known characterization of parking functions, which is  itself readily established from their definition:

\begin{lemma}\label{lem:pfcharacterization}
A sequence $ (a_1, \dots, a_n) \in \N^n$ is a parking
function if and only if for each $i \in [1,n]$ it contains at least $i$
entries  less than $i$ (equivalently, at most $i$ entries  are greater than or equal to $n-i$). \qed
\end{lemma}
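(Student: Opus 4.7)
The plan is to unwind the defining condition $a_i' \leq i-1$ into a statement about how many entries of the sequence lie below a given threshold. The characterization follows essentially by pigeonhole applied to the sorted rearrangement.

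First, I would observe the immediate forward direction: if $(a_1,\ldots,a_n)$ is a parking function, then its non-decreasing rearrangement $(a_1',\ldots,a_n')$ satisfies $a_j' \leq j-1 < i$ for every $j \leq i$, so the $i$ smallest entries are all strictly less than $i$. Hence at least $i$ entries of the original sequence are less than $i$.

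For the reverse direction, I would argue contrapositively, or directly as follows. Fix any $i \in [1,n]$ and suppose the sequence has at least $i$ entries strictly less than $i$. Then when we list the entries in non-decreasing order, the $i$-th smallest, $a_i'$, is necessarily among those entries less than $i$, giving $a_i' \leq i-1$. Since this holds for every $i$, the sequence is a parking function.

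Finally, the parenthetical equivalent form is a reformulation: among $n$ entries, having at least $i$ entries less than $i$ is the same as having at most $n-i$ entries greater than or equal to $i$. Substituting $j = n-i$ (so that $i = n-j$ ranges over $[0,n-1]$ as $j$ does) converts this to the condition that at most $j$ entries are $\geq n-j$, which is the quoted alternative (up to the trivially vacuous case $j=0$ and the relabelling of the indexing variable). I will include one line to record this substitution explicitly.

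There is no real obstacle here — the result is essentially a restatement of the definition — so the proof will be short. The only thing worth being careful about is the strict-versus-weak inequality bookkeeping ($a_i' \leq i-1$ is the same as $a_i' < i$), and verifying that the equivalent form is indeed obtained by taking complements in $\{0,1,\ldots,n\}$ and reindexing, which I would state in one sentence.
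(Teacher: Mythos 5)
Your proof of the main biconditional is correct and is exactly the routine sorting/pigeonhole argument the paper alludes to when it calls the lemma ``readily established from their definition'' (the paper itself supplies no proof, so there is no alternative approach to compare against). One small slip in your discussion of the parenthetical form: after substituting $j=n-i$ the vacuous endpoint is $j=n$ (``at most $n$ entries $\geq 0$''), whereas $j=0$ gives the genuine condition ``no entry is $\geq n$''; you have these two swapped, so the sentence recording the reindexing should be corrected, though this does not affect the validity of the proof of the stated characterization.
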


\begin{proof}[Proof of Proposition~\ref{prop:alwayspark}]
By Lemma~\ref{lem:pfcharacterization}, $L(\fact) \in \p_n$ if and only if $\graph{f}$ has at least $i$ edges with an endpoint less than $i$, for each $i \in [1,n]$.  To see that this is so, let $G_i$ be the subgraph of $G(f)$ induced by vertices $[i,n]$. Since $\graph{f}$ is a tree (by Lemma~\ref{lem:minimal1}), $G_i$ is a forest with $n-i+1$ vertices and hence at most $n-i$ edges.  Thus $\graph{f} \setminus G_i$ has at least $i$ edges, as desired.  The proof that $U(\fact) \in \M_n$ is similar.
\end{proof}

We now show  that  unimodality of $\s$ ensures surjectivity of $\map{\Ls}{\F_\s}{\p_n}$. This is the crux of  Theorem~\ref{thm:unimodal}.  As proof we give an algorithm that explicitly constructs an element of $\Ls^{-1}(p)$.

The algorithm is most simply described in graphical terms.  Given a parking function $p=(a_1,\ldots,a_n)$ and unimodal cycle $\s=(0\,s_1\, \cdots \, s_n)$, we build an arch diagram as follows, being sure to abide by the  embedding rules throughout  (\emph{i.e.} edges are drawn above the axis without crossings and rotators remain increasing).
 Figure~\ref{fig:biane1} illustrates the process.   
\begin{figure}[t]
	\centering
	\includegraphics[width=.9\textwidth]{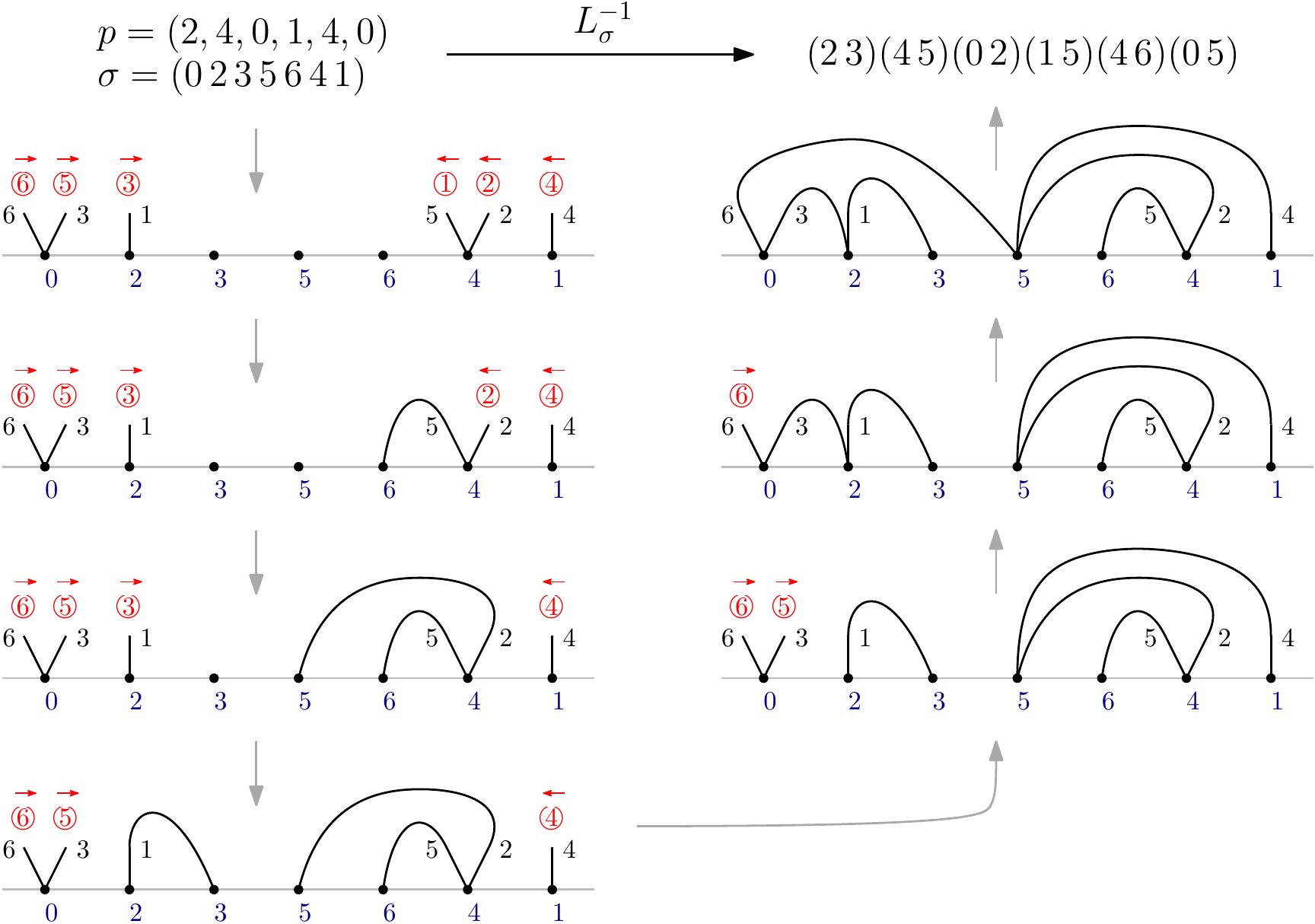}
	\caption{Constructing $\Ls^{-1}(p)$ for 
	$p=(2,4,0,1,4,0) \in \p_6$ and $\s = (0\,2\,3\,5\,6\,4\,1) \in \cycles{6}$.  Circled numbers indicate the order in which half-edges are processed, and the overlying arrows indicate the direction in which they are to be extended to form arcs. }
	\label{fig:biane1}
\end{figure}
\begin{itemize}
\item	{\scshape Initialize:}
\begin{itemize}
\item	Arrange vertices $0, s_1, \ldots,s_n$ in left-to-right order along the $x$-axis.
\item  	For each  $j \in [1,n]$, attach  to vertex $a_j$ a half-edge with label $j$.
\end{itemize}

\item	{\scshape Iterate:}
\begin{itemize}
\item	Let $v$ be the largest vertex with at least one incident half-edge. If no such $v$ exists,  the algorithm terminates.
\item	If $v$ is left of vertex $n$,  extend its \emph{least} incident half-edge to connect it with the first possible vertex to its \emph{right}.
\item	If $v$ is right of vertex $n$,  extend its \emph{greatest} incident half-edge to connect it with the first possible vertex to its \emph{left}.
\end{itemize}
\end{itemize}

We claim that the above procedure can always be followed to completion, and that the resulting arch diagram $A$ satisfies $\Ls(\FACTs{\s}{A})=p$.  To prove the claim we shall restate the algorithm more formally in terms of factorizations. Some additional terminology will be convenient.

\newcommand{\operm}[2]{\omega_{#1,#2}}

Let $\s = (s_0\,\,s_1\,\cdots\,s_n) \in \cycles{n}$ be unimodal, with $s_0=0$ and $s_N=n$.  We say an index $i \in [n-1]$ is \emph{$\s$-left}  (resp. \emph{$\s$-right}) if $i = s_k$ for some $k < N$ (resp. $k > N$).
Now fix $p=(a_1,\ldots,a_n) \in \p_n$ and let $J_i$ be the list of all  indices $j$ such that $a_j=i$, taken in increasing order when $i$ is $\s$-left and in decreasing order when $i$ is $\s$-right.  Let $\operm{\s}{p}$ denote the permutation of $[1,n]$ defined by the concatenation $J_{n-1}, J_{n-2}, \ldots,
J_0$.  

With respect to the  graphical algorithm, $\s$-left (resp. $\s$-right)  elements correspond to vertices whose half-edges are extended to the right (left) to form
complete arcs, while $\operm{\s}{p}$ describes the order in which half-edges are processed.  Observe that
the sequence $(a_{\operm{\s}{p}(i)})_{1 \leq i \leq n}$ must be weakly decreasing.  

Finally, we declare a permutation $\pi \in \Sym{[n]}$ to be \emph{$\s$-contiguous} if each of its cycles is of the form $(s_i\,s_{i+1}\,\cdots\,s_{j})$ for some $i \leq j$,

\begin{example}\label{examp:algo1}  
	We continue with the same inputs as in Figure
	\ref{fig:biane1}, so set $\s=(0\,2\,3\, 5\, 6\, 4\,1) \in \cycles{6}$.
 The labels $\{0,2,3,5\}$ are $\s$-left and $\{1,4\}$ are $\s$-right.  For  $p=(2,4,0,1,4,0) \in \p_6$ we have $J_5 = \emptyset, J_4 = (5,2), J_3 =
\emptyset, J_2 = (1), J_1=(4)$ and $J_0 = (3,6)$, so $\operm{\s}{p} =
(5,2,1,4,3,6)$ and $(a_{\operm{\s}{p}(i)})_{1 \leq i \leq 6} = (4,4,2,1,0,0)$.   
Both $(0\,2)(5\,6\,4)$ and $(6\,4)$ are $\s$-contiguous, whereas $(2\,3\,4)(5\,6)$ is not.
\end{example}

\begin{algorithm}
Let $\s \in \cycles{n}$ be unimodal and let $p=(a_1,\ldots,a_n) \in \p_n$. The
following procedure terminates with a factorization $f = (\t_1,\ldots,\t_n) \in
\F_{\s}$ such that $\Ls(f)=p$.  
\begin{tabbing}
mm \= mmm\= mm\= mm\= mm\= mm\= mm\= \kill
\> 01 \> $(\t_1,\ldots,\t_n) \gets (\pid,\ldots,\pid)$\\
\> 02 \> $\pi_1 \gets \pid$ \\
\> 03 \> {\bf for} $i$ {\bf from} 1 {\bf to} $n$  {\bf do}\\
\> 04 \> \> $j \gets \operm{\s}{p}(i)$ \\
\> 05 \> \> {\bf if} $a_j$ is $\s$-left  {\bf then} \\
\> 06 \> \> \> $b_j \gets \pi_{i}^{-1}\s\t_n \t_{n-1} \cdots \t_{j+1}(a_j)$ \\
\> 07 \> \> {\bf else} \\
\> 08 \> \> \> $b_j \gets \pi_i \s^{-1}\t_1 \t_2 \cdots \t_{j-1}(a_j)$ \\
\> 09 \> \> {\bf end if} \\
\> 10 \> \> $\t_j \gets (a_j\,b_j)$ \\
\> 11 \> \> $\pi_{i+1} \gets \t_1 \cdots \t_n$ \\
\> 12 \> {\bf end for}
\end{tabbing}
\label{algo:unimodal}
\end{algorithm}

Algorithm~\ref{algo:unimodal} builds $\Ls^{-1}(p)$ through a sequence of ``partial factorizations'' of $\s$ --- that is,   tuples $(\t_1,\ldots,\t_n)$ such that each $\t_j$ is either  a transposition or the identity   and the product $\t_1 \cdots
\t_n$ is $\s$-contiguous.   The initial state is  $(\t_1,\ldots,\t_n)=(\iota,\ldots.\iota)$, and each iteration  replaces a distinct factor $\t_j$ with a transposition of the form $(a_j\,b_j)$.  After $n$ iterations this results in a partial factorization of $\s$  composed of $n$ transpositions, which is necessarily an element of $\F_\s$ with lower sequence $p$.

Table~\ref{tab:algexample} shows the algorithm being applied to the
same input as in the prior graphical example in Figure~\ref{fig:biane1}.  This allows for comparison of the two approaches and should help guide the reader through the following proof of correctness.  Note that the table also displays the value of certain parameters arising in the proof.

\newcommand{\trans}[2]{(#1\,#2)}
\newcommand{\highlight}[1]{#1}
\newcolumntype{L}{>{$}c<{$}} %
\newcommand{\mcyc}[1]{{\color{red} (#1)}}
\newcommand{\Mcyc}[1]{{\color{blue} (#1)}}
\begin{table}[t]
\caption{Algorithm~\ref{algo:unimodal} applied to $\s = (0\,2\,3\,5\,6\,4\,1) \in \cycles{6}$ and $p=(2,4,0,1,4,0) \in \p_6$, with  $\operm{\s}{p}=(5,2,1,4,3,6)$. Column  $s$ displays $s_{\ell+1}$ when $a_j$ is $\s$-left and $s_{\ell-1}$ when $a_j$ is $\s$-right.  Cycles $C$ and $C'$ are highlighted in red and blue, respectively. } 

\centering

\begin{tabular}{L|LLLLLL|L|L|L|L|L}
i &\t_1 & \t_2 & \t_3 & \t_4 & \t_5 & \t_6 & \pi_i & j & a_j & s  & b_j \\ \hline
1 &  
\pid & \pid & \pid & \pid & \pid  & \pid &
(0)(1)(2)(3)\mcyc{4}(5)\Mcyc{6} &
5 &
4 &
6 & 6\\
2 & \pid &  \pid & \pid & \pid & \highlight{\trans{4}{6}} & \pid & (0)(1)(2)(3)\mcyc{4\,6}\Mcyc{5}  &
2  &
4 &
5 & 5\\
3 & \pid & \trans{4}{5} & \pid & \pid & \highlight{\trans{4}{6}} & \pid &
(0)(1)\mcyc{2}\Mcyc{3}(4\,5\,6) & 
1 & 
2 &
3 & 3\\
4 & \highlight{\trans{2}{3}} & \trans{4}{5} & \pid & \pid & \trans{4}{6} & \pid &
 (0)\mcyc{1}(2\,3)\Mcyc{4\,5\,6} & 
4 &
1 &
4 & 5\\
5 & \trans{2}{3} & \trans{4}{5} & \pid & \highlight{\trans{1}{5}} & \trans{4}{6} & \pid &
\mcyc{0}(1\,5\,6\,4)\Mcyc{2\,3} &
3 &
0 &
2 & 2\\
6 & \trans{2}{3} & \trans{4}{5} & \highlight{\trans{0}{2}} & \trans{1}{5} & \trans{4}{6} & \pid  &
\mcyc{0\,2\,3}\Mcyc{1\,5\,6\,4} &
6 &
0 &
5 & 5 \\
7 & \trans{2}{3} & \trans{4}{5} & \trans{0}{2} & \trans{1}{2} & \trans{4}{6} & \highlight{\trans{0}{5}} &
(0\,2\,3\,5\,6\,4\,1)  & & &
\end{tabular}
\label{tab:algexample}
\end{table}

\begin{proof}[Proof of correctness:]
Say $\s=(s_0\,\ldots,s_n)$, where $s_0=0$ and $s_N = n$, and for brevity let $w_i=\operm{\s}{p}(i)$ for $i \in [n-1]$. Suppose  the following conditions are met upon entering the $i$-th iteration of the loop, as is clearly the case when $i=1$:
\begin{itemize}
\item[(A)]	$\pi_i = \t_1\cdots\t_n$ is $\s$-contiguous and  $\len{\pi_i}=n+2-i$  

\item[(B)] If $r \in \{w_1,\ldots,w_{i-1}\}$ then $\t_r = (a_r\,b_r)$  for some $b_r > a_r$, otherwise $\t_r = \pid$.
\end{itemize}
We prove the result inductively by verifying that these conditions continue to hold, with $i$ replaced by $i+1$, upon completion of the $i$-th iteration. (A)  then implies the algorithm terminates with a factorization $f=(\t_1,\ldots,\t_r)$ of some $\s$-contiguous full cycle, which much of course be  $\s$ itself, while (B) ensures that $\Ls(f)=p$.

\newcommand{\lessthan}{\textcircled{\scriptsize 1}}
\newcommand{\contained}{\textcircled{\scriptsize 2}}

Let $j = w_i$ and let $C$ be the cycle of $\pi_i$ containing $a_j$.  We shall  make repeated use of the following assertions:
\begin{enumerate}
\item[\lessthan{}]	$\pi_i$ fixes all elements $< a_j$
 \item[\contained{}] $\supp{C}$ does not contain the entire interval $[a_j,n]$
\end{enumerate}
The \emph{support} of a cycle $C=(c_1\,\cdots\,c_k)$ is defined as $\supp{C}:=\{c_1,\ldots,c_k\}$. 

Clearly \lessthan{} follows from hypothesis (B) and the fact that   $(a_{w_1},\ldots,a_{w_n})$ is weakly decreasing.  To prove \contained{}, note that Lemma~\ref{lem:pfcharacterization} gives $a_j \leq n-i$. Thus $[a_j,n]$ contains at least $i+1$ elements. But $C$ is a cycle of $\pi_i$, and $\pi_i$ is a product of $i-1$ transpositions, so $C$ is of length at most $i$. 

The remainder of the proof is elementary but technical.  
The schematic in Figure~\ref{fig:schematic} illustrates the equivalence with the graphical version of the algorithm.
\begin{figure}[t]
	\centering
	\includegraphics[width=.8\textwidth]{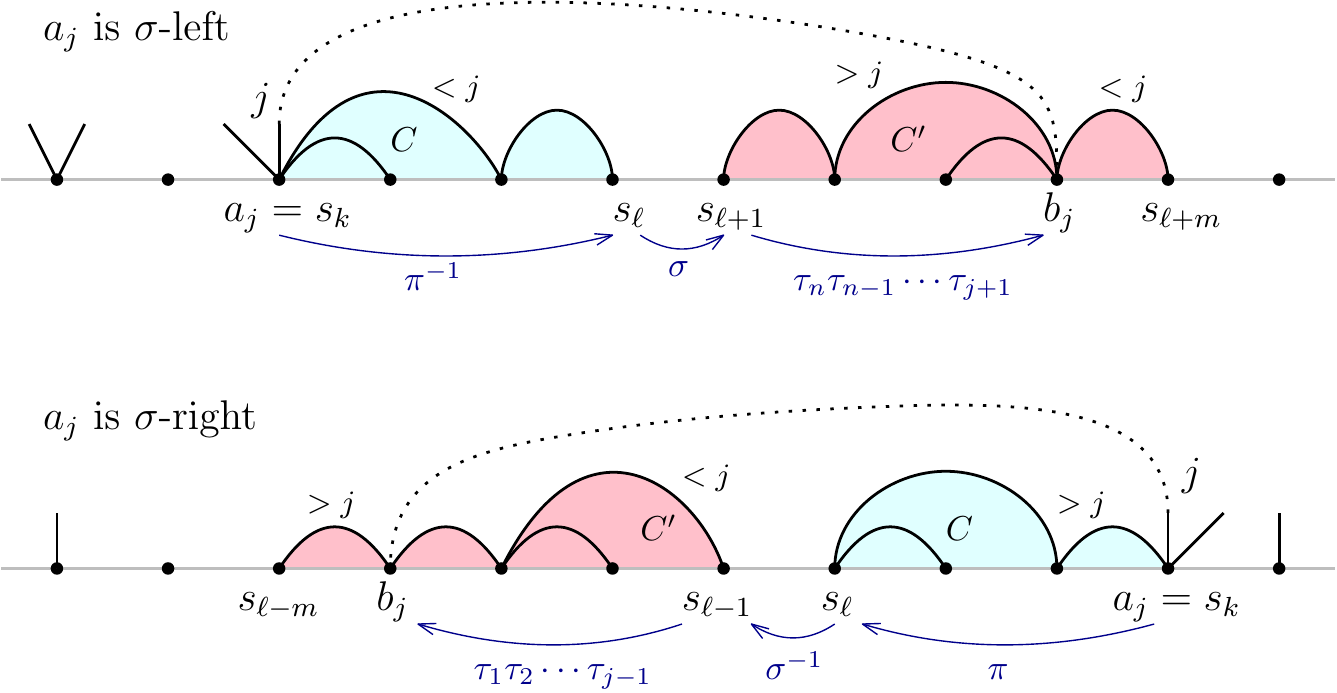}
\caption{Schematic for the proof of Proposition~\ref{prop:unimodal}.}
	\label{fig:schematic}
\end{figure}

\medskip
\noindent{\emph{Case 1:} $a_j$ is $\s$-left}
\medskip

Say $a_j=s_k$, where $k < N$. We claim  $C = (s_k\, s_{k+1}\,\cdots\,s_{\ell})$
for some $\ell \in [k,n-1]$.  As proof, observe that $\pi_i$ being $\s$-contiguous means either $C = (s_k\, s_{k+1}\, \cdots\, s_{\ell})$ for some $\ell \geq k$ or $s_{k-1} \in \supp{C}$.  The latter is impossible, as unimodality of $\s$ gives $s_{k-1} < s_k$ for $k < N$ while \lessthan{} implies $s_k$ is the least element of $C$.  And we cannot have $C=(s_k\,\cdots\,s_n)$, as unimodality would then give  $\supp{C} \supseteq [s_k,n]$, contrary to \contained{}.

Let $C'$ be the cycle of $\pi_i$ containing $s_{\ell+1}$. Since $\pi_i$ is $\s$-contiguous, we have $C'=(s_{\ell+1} \,\cdots\, s_{\ell+m})$ for some $m \geq 1$. Then $C$ and $C'$ are merged into the single cycle $(s_k \,s_{k+1}\, \cdots\, s_{\ell+m})$ of the permutation $\rho=\pi_i(s_k\,s_{\ell+1})$.  Clearly $\rho$ is $\s$-contiguous and composed of $n+1-i$ cycles. 

Now let $\a = \t_1 \cdots \t_{j-1}$ and $\b = \t_{j+1} \cdots \t_n$. Then $\pi_i =
\a\b$, as $\t_j = \pid$, and
\begin{equation}
\label{eq:joined}
\rho = \pi_i(s_k\,s_{\ell+1})
=
	\a\big( \b(s_k\,s_{\ell+1})\b^{-1}) \b 
	= 
	\a\big( \b^{-1}(a_j)\; \b^{-1}(s_{\ell+1})\big)\b
\end{equation}
We claim $\b^{-1}(a_j)=a_j$  and $\b^{-1}(s_{\ell+1}) > a_j$.  By \lessthan{}, no $\t_r$ can move any symbol smaller than $a_j$.  So if $\t_r$ moves $a_j$ then (B) gives $r \in \{w_1,\ldots,w_{i-1}\}$ and $a_r = a_j$, whence $r \leq j$ by definition of $\operm{\s}{p}$. Thus $\t_r(a_j)=a_j$ for all $r > j$, giving $\b^{-1}(a_j)=a_j$. Now observe that $s_{\ell+1} > s_k=a_j$, as otherwise unimodality would imply $[s_k,n] \subseteq \{s_k, s_{k+1},\cdots, s_{\ell}\} = \supp{C}$, contradicting \contained{}.  Since none of the $\t_r$ move symbols smaller than $a_j$, we conclude $\b^{-1}(s_{\ell+1}) > a_j$.  

Finally, let $b_j = \b^{-1}(s_{\ell+1})=\pi_i^{-1}\s\b^{-1}(a_j)$ and set $\t_j=(a_j\,b_j)$ as in lines 06 and 10 of the algorithm.  We have shown $a_j < b_j$, and by~\eqref{eq:joined} we see that line 11  sets $\pi_{i+1} =\t_1 \cdots\t_n = \a(a_j\,b_j)\b = \rho$. Thus $\pi_{i+1}$ is $\s$-contiguous and has  $n+1-i$ cycles.

\medskip
\noindent{\emph{Case 2:} $a_j$ is $\s$-right}
\medskip

The proof is similar to the $\s$-left case and details will be omitted. If $a_j=s_k$, where $k > N$, then we have $C = (s_{\ell}\, \cdots\, s_{k})$ for some $\ell \in [1,k]$.  Let $C'=(s_{\ell-m}\,\cdots\,s_{\ell-1})$ be the cycle of $\pi_i$ containing $\s_{\ell-1}$, and note that $C$ and $C'$ are merged into the cycle $(s_{\ell-m}\,\cdots\,s_k)$ of the $\s$-contiguous permutation $\rho = (s_k\,s_{\ell-1})\pi_i$. With $\a$ and $\b$ as above, we arrive at $\pi_{i+1} = \rho =  \a(a_j\,b_j)\b$, where $b_j = \a(s_{\ell-1}) > a_j$. 
\end{proof}

Notice that Algorithm~\ref{algo:unimodal} simplifies considerably in the case $\s=\s_n$, where every $i \in [n-1]$ is $\s$-left.  Suppose, in the $i$-th iteration, that $C$ is the cycle of
$\pi_i$ containing $a_j$. Let $m$ be  the largest element of $C$ and let $C'$ be the cycle
of $\pi_i$ containing $m+1$. Then  $b_j$ is the image of $m+1$ under $\t_n
\t_{n-1}\cdots\t_{j+1}$, and $\pi_{i+1}$ is obtained from $\pi_i$ by merging cycles $C$
and $C'$.  Table~\ref{tab:algexample2} provides an example.

\begin{table}[t]
\caption{Algorithm~\ref{algo:unimodal} applied to $\s = (0\,1\,2\,3\,4\,5\,6) \in \cycles{6}$ and $p=(2,4,0,1,4,0) \in \p_6$, with $\operm{\s}{p} = (5,2,1,4,3,6)$. The cycles $C$ and $C'$ are highlighted in red and blue, respectively.} 
\centering
\begin{tabular}{L|LLLLLL|L|L|L|L|L}
i &\t_1 & \t_2 & \t_3 & \t_4 & \t_5 & \t_6 & \pi_i & j & a_j & m & b_j \\ \hline
1 &  
\pid & \pid & \pid & \pid & \pid  & \pid &
(0)(1)(2)(3)\mcyc{4}\Mcyc{5}(6) &
2 &
4 &
4 & 5\\
2 & \pid & \highlight{\trans{4}{5}}   & \pid & \pid & \pid & \pid &
(0)(1)(2)(3)\mcyc{4\,5}\Mcyc{6}  &
5  &
4 &
5 & 6\\
3 & \pid & \trans{4}{5} & \pid & \pid & \highlight{\trans{4}{6}} & \pid &
(0)(1)\mcyc{2}\Mcyc{3}(4\,5\,6) & 
1 & 
2 &
2 & 3\\
4 & \highlight{\trans{2}{3}} & \trans{4}{5} & \pid & \pid & \trans{4}{6} & \pid &
 (0)\mcyc{1}\Mcyc{2\,3}(4\,5\,6) & 
4 &
1 &
1 & 2\\
5 & \trans{2}{3} & \trans{4}{5} & \pid & \highlight{\trans{1}{2}} & \trans{4}{6} & \pid &
\mcyc{0}\Mcyc{1\,2\,3}(4\,5\,6) &
3 &
0 &
0 & 2\\
6 & \trans{2}{3} & \trans{4}{5} & \highlight{\trans{0}{2}} & \trans{1}{2} & \trans{4}{6} & \pid  &
\mcyc{0\,1\,2\,3}\Mcyc{4\,5\,6} &
6 &
0 &
3 & 4 \\
7 & \trans{2}{3} & \trans{4}{5} & \trans{0}{2} & \trans{1}{2} & \trans{4}{6} & \highlight{\trans{0}{4}} &
(0\,1\,2\,3\,4\,5\,6) & & &
\end{tabular}
\label{tab:algexample2}
\end{table}

Taken together with Proposition~\ref{prop:alwayspark}, the following two results complete the proof of Theorem~\ref{thm:unimodal}.

\begin{proposition}
	$\s \in \cycles{n}$ is unimodal if and only if $\map{\Ls}{\F_\s}{\p_n}$ is bijective.
	\label{prop:unimodal}
\end{proposition}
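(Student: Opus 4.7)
My plan has two parts, corresponding to the two implications. The forward direction is nearly immediate from what has already been done: Algorithm~\ref{algo:unimodal}, together with its proof of correctness, shows that when $\s$ is unimodal, $\Ls$ surjects onto $\p_n$. Because the number of minimal factorizations of a permutation depends only on its cycle type, $|\F_\s| = |\F_n| = (n+1)^{n-1} = |\p_n|$ for every $\s \in \cycles{n}$, and surjectivity therefore forces bijectivity.

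For the converse, I will exhibit, for each non-unimodal $\s$, an explicit $p \in \p_n$ with $\Ls^{-1}(p) = \emptyset$, so that $\Ls$ fails to be surjective (hence also fails to be bijective). The candidate will have a single nonzero entry placed in the first coordinate: $p := (u, 0, \ldots, 0)$ for an appropriately chosen $u \in [1, n-1]$. Viewing $(s_0, s_1, \ldots, s_n)$ as a cyclic sequence with $s_{n+1} := s_0 = 0$, call $u$ a \emph{peak} of $\s$ if both cyclic neighbors of $u$ in $\s$ are strictly less than $u$. In any cyclic arrangement of distinct values, peaks and valleys alternate and hence are equinumerous; in our setting $s_0 = 0$ is always a valley and $s_N = n$ always a peak, so $\s$ is unimodal precisely when $n$ is its only peak. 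Non-unimodality therefore guarantees the existence of some peak $u \in [1, n-1]$, which I take to define $p$. Note that $p \in \p_n$ since its sorted rearrangement $(0, \ldots, 0, u)$ satisfies $u \leq n-1$.

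Next I suppose $f = (u\,b_1)(0\,b_2) \cdots (0\,b_n)$ is a hypothetical element of $\Ls^{-1}(p)$, so $b_1 > u$ and $b_k > 0$ for $k \geq 2$. Lemma~\ref{lem:minimal1} forces $\graph{f}$ to be a tree, from which one readily deduces that $b_2, \ldots, b_n$ are distinct and that $(0\,b_2) \cdots (0\,b_n) = (0\,b_2\,b_3\,\cdots\,b_n)$ is a single $n$-cycle $C$. The tree condition also forces exactly one of $u, b_1$ to be ``new'', i.e., outside $\supp{C}$, giving two subcases. A direct computation of $(u\,b_1) \cdot C$ shows that $b_1$ must equal either the cyclic predecessor of $u$ in $\s$ (when $u$ is the new vertex) or the cyclic successor of $u$ in $\s$ (when $b_1$ is the new vertex). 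But the peak property places both neighbors in $[0, u-1]$, contradicting $b_1 > u$. Hence $\Ls^{-1}(p)$ is empty, as required.

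The main technical step is the ``cut'' computation just invoked: one must trace carefully how left-multiplying the $n$-cycle $C$ by the transposition $(u\,b_1)$ produces an $(n+1)$-cycle, and then read off, from the requirement that this cycle equal $\s$, that $b_1$ is forced to be a specific cyclic neighbor of $u$. In the first subcase ($u$ new) this amounts to observing that $\s$ is obtained from $C$ by inserting $u$ immediately after $b_1$, so $b_1 = \s^{-1}(u)$; in the second subcase ($b_1$ new) it amounts to observing that $\s$ is obtained from $C$ by inserting $b_1$ immediately after $u$, so $b_1 = \s(u)$. The remaining ingredients --- the cardinality identity $|\F_\s| = |\p_n|$, the peak-counting characterization of unimodality, and the parking-function check for $p$ --- are routine.
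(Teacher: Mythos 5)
Your proof is correct, and while its sufficiency half coincides with the paper's (Algorithm~\ref{algo:unimodal} gives surjectivity of $L_\s$ for unimodal $\s$, and $|\F_\s|=|\p_n|=(n+1)^{n-1}$ upgrades surjectivity to bijectivity), your necessity half takes a genuinely different route. The paper picks an interior valley $s_{i-1}>s_i<s_{i+1}$ of a non-unimodal $\s$ and exhibits two explicit preimages of $(0,\ldots,0,s_i)$, so $L_\s$ fails to be \emph{injective}; you instead pick a non-maximal cyclic peak $u$ and show that $(u,0,\ldots,0)$ has no preimage at all, so $L_\s$ fails to be \emph{surjective}. Your argument needs a bit more machinery --- Lemma~\ref{lem:minimal1} to force the tree structure, the identity $(0\,b_2)\cdots(0\,b_n)=(0\,b_2\,\cdots\,b_n)$, and the cut computation showing $b_1$ must equal $\s^{-1}(u)$ (when $u$ is the vertex outside the $n$-cycle) or $\s(u)$ (when $b_1$ is), both of which are $<u$ by the peak property, contradicting $b_1>u$ --- and these steps check out, as does the preliminary claim that non-unimodality produces a peak $u\in[1,n-1]$ (cyclic peaks and valleys are equinumerous, $0$ is always a valley and $n$ always a peak). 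What each approach buys: the paper's witness is shorter and pinpoints the obstruction as a collision of two factorizations, whereas yours pinpoints a parking function that $L_\s$ misses; since $|\F_\s|=|\p_n|$, the two failures are equivalent, so the proofs are complementary views of the same phenomenon, and either one, combined with the shared sufficiency argument, proves the proposition.
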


\begin{proof}
It is easy to see that unimodality of $\s$ is necessary for $\Ls$ to be bijective. Indeed, suppose $\s$ is not unimodal, so that  $s_{i-1} > s_i < s_{i+1}$ for some $i$. Then $p=(0, \dots, 0, s_i) \in \p_n$ has two preimages in $\F_\s$ under $\Ls$, namely
$$
(0\,s_1) \cdots \widehat{(0\,s_i)} \cdots (0\,s_n)(s_i\,s_{i+1})
\qquad\text{and}\qquad
(0\,s_1) \cdots \widehat{(0\,s_{i-1})} \cdots (0\,s_n)(s_i\,s_{i-1}),
$$
where the hat indicates removal of the marked transposition.  On the other hand, Algorithm~\ref{algo:unimodal} shows $\Ls$ is surjective  when $\s$ is
unimodal. Bijectivity follows since $|\F_{\s}| = |\p_n| = (n+1)^{n-1}$. 
\end{proof}

\begin{proposition}\label{prop:unimodalu}
$\s \in \cycles{n}$ is unimodal if and only if $\map{\Us}{\F_\s}{\M_n}$ is bijective.
\end{proposition}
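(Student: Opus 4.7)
The plan is to reduce this to Proposition~\ref{prop:unimodal} via the same involution sketched in the introduction for the case $\s = \s_n$. Explicitly, define $\Phi : \F_\s \to \F_{\hat{\s}}$ by sending a factorization $f = (a_1\,b_1)\cdots(a_n\,b_n)$ to
\[
\Phi(f) := (n{-}b_n\;n{-}a_n)(n{-}b_{n-1}\;n{-}a_{n-1})\cdots(n{-}b_1\;n{-}a_1),
\]
where each factor is already written with the smaller symbol first (since $a_i<b_i$ forces $n-b_i<n-a_i$). Because the relabelling $i\mapsto n-i$ is a group automorphism of $\Sym{[n]}$ and reversing a factorization inverts the product, $\Phi(f)$ is a minimal factorization of $\hat\s := (\s^\ast)^{-1}$, where $\s^\ast$ denotes $\s$ with entries relabelled by $i\mapsto n-i$. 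Clearly $\Phi$ is an involution (up to the natural identification of $\F_{\hat{\hat\s}}$ with $\F_\s$), hence a bijection.

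Next I would check that $\Phi$ intertwines $U_\s$ and $L_{\hat\s}$ via the ``reverse-and-complement'' map
\[
R_n : (b_1,\ldots,b_n)\longmapsto(n-b_n,\ldots,n-b_1).
\]
Indeed from the display above, $L_{\hat\s}(\Phi(f)) = R_n(U_\s(f))$. It is straightforward from Lemma~\ref{lem:pfcharacterization} (and its analogue for major sequences given in the definition of $\M_n$) to verify that $R_n$ restricts to a bijection $\M_n \to \p_n$. Consequently $\map{\Us}{\F_\s}{\M_n}$ is bijective if and only if $\map{L_{\hat\s}}{\F_{\hat\s}}{\p_n}$ is bijective, which by Proposition~\ref{prop:unimodal} holds if and only if $\hat\s$ is unimodal.

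Finally, I would verify the easy combinatorial fact that $\hat\s$ is unimodal if and only if $\s$ is unimodal. Writing $\s = (0\,s_1\,\cdots\,s_n)$ with $s_N=n$ and $s_1<\cdots<s_N>s_{N+1}>\cdots>s_n$, one computes that after rotating to put $0$ first,
\[
\hat\s = (0\,n{-}s_{N-1}\,n{-}s_{N-2}\,\cdots\,n{-}s_1\,n\,n{-}s_n\,\cdots\,n{-}s_{N+1}),
\]
whose entries are strictly increasing up to $n$ and then strictly decreasing, hence unimodal. The converse follows since $\s\mapsto\hat\s$ is an involution on $\cycles{n}$. Combining these ingredients with Proposition~\ref{prop:alwayspark} (which already gives $U_\s(\F_\s)\subseteq\M_n$) yields the proposition. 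The only non-routine bookkeeping is the explicit description of $\hat\s$ and verifying its unimodality; everything else is a direct consequence of Proposition~\ref{prop:unimodal} and the elementary bijection $R_n$.
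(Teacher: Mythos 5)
Your proof is correct and follows essentially the same route as the paper: reduce to Proposition~\ref{prop:unimodal} via the symbol-complementation $i\mapsto n-i$, which turns upper sequences into (complemented) lower sequences and preserves unimodality. The only cosmetic difference is that you also reverse the order of the factors (so you land in $\F_{\gamma\s^{-1}\gamma}$ rather than $\F_{\gamma\s\gamma}$, where $\gamma(i)=n-i$), whereas the paper conjugates the factors in place; both reductions work equally well.
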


\begin{proof}
Define $\switch \in \Sym{[n]}$   by $\switch(i)=n-i$. This involution induces   bijections $\p_n \rightarrow \M_n$  and $\F_{\s}\rightarrow \F_{\switch \s \switch}$ via coordinate-wise action and conjugation of factors, respectively. Note that  $\Us$ is the composition   $\F_{\s} \rightarrow \F_{\switch\s\switch} \overset{L}{\rightarrow} \p_n \rightarrow \M_n$, where $L = L_{\switch\s\switch}$.  Thus $\Us$ is bijective if and only if  $\map{L}{\F_{\switch\s\switch}}{\p_n}$ is  bijective, which by Proposition~\ref{prop:unimodal} is equivalent to $\switch\s\switch$ being unimodal. The result follows since conjugation by $\switch$ clearly preserves unimodality.
\end{proof}

We close this section by reconsidering Algorithm~\ref{algo:unimodal} in the special case $\s = \s_n$.  Note that the algorithm (or its graphical incarnation) can be viewed as the reconstruction of the upper sequence of a factorization $f \in \F_n$ from its lower sequence. Equivalently, it provides a mapping from the  lower path of $f$ to its upper path.  We leave it to the reader to verify that this mapping from $P_L(f)$ and $P_U(f)$ can be succinctly described as follows:  First shift all labels of $P_L$ to the left endpoints of their respective steps. Then, working left-to-right, push each label northeast until it encounters either an unlabelled point on $P_L$ or a point with a smaller label.  The height at which a label of $P_L$ comes to rest is the height at which it occurs in $P_U$.  See Figure~\ref{fig:pushing} and compare with Figure~\ref{fig:factarea}.
\begin{figure}[t]
	\centering
	\includegraphics[width=.8\textwidth]{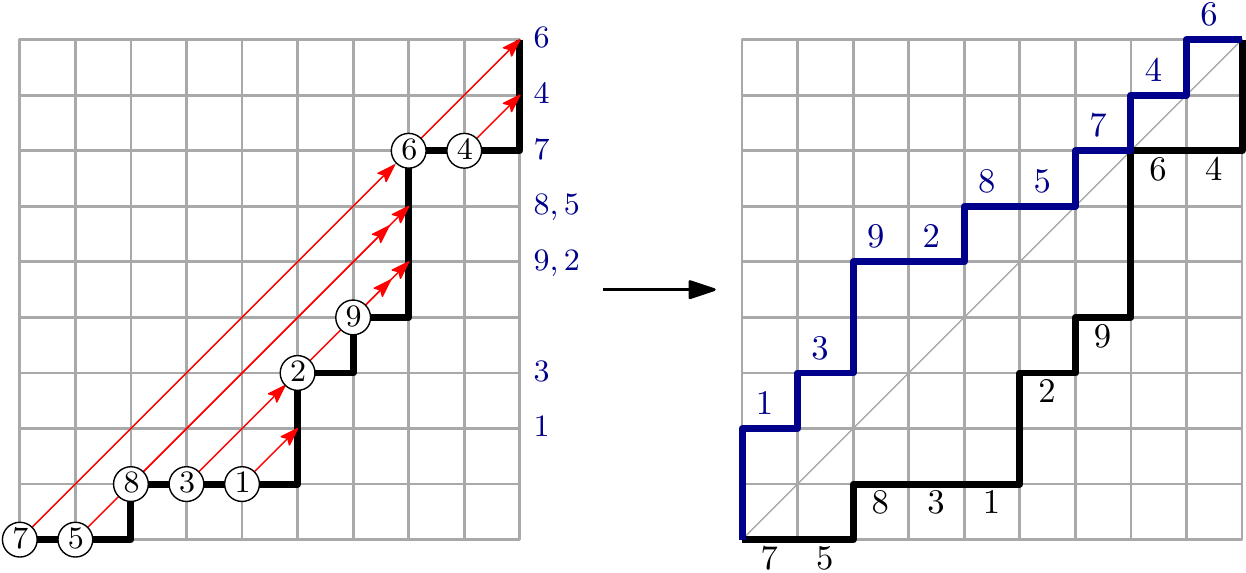}
\caption{Reconstructing the upper path of the factorization $(1\,2)(3\,5)(1\,3)(7\,8)(0\,6)(7\,9)(0\,7)(1\,6)(4\,5) \in \F_9$ from its lower path.}
	\label{fig:pushing}
\end{figure}

\section{The Bounce Statistic}\label{sec:bounce}

Recall the bounce statistic defined in \eqref{eq:defbounce} in Section \ref{sec:ddb}.  Our goal in this section
is to prove a refinement of Theorem \ref{thm:bounce};  namely, we define a
refinement to the bounce statistic  and show that those statistics over
parking functions are equidistributed with the lower and upper area
statistics over factorizations via the connection through tree
inversions.

Consider a parking function $p$ of length $n$ drawn as the usual
labelled Dyck path $P$ on a square grid
between the points $(0,0)$ and $(n,n)$  with its bounce path $B$,
as given in Figure \ref{fig:bouncepath}.  That figure
is repeated in Figure \ref{fig:bounce} (left).   Recall our convention
is to list the labels at the same height in $P$ in decreasing order.  Let $w_1, \ldots,
w_n$ be the permutation of horizontal labels from left to right that occur in $P$, and set $w= w_0, w_1, \dots, w_n$, where $w_0=0$.
For $i \in [n]$, define the sets 
\begin{align*}
	C_{w_i} &:= \textnormal{ the labels of } P \textnormal{ at
	height } i, \textnormal{ and }\\
	D_{w_i} &:=C_{w_i} \cup \bigcup_{j \in C_{w_i}} D_j.
\end{align*}
Because of their recursive definition, it is convenient to find the sets $D_{w_i}$
in the order $i=n, \dots, 0$.  For example, for the
parking function $(1,3,1,7,0,7,0,1,4) \in \p_9$ in Figure
\ref{fig:bounce} (left), we have the permutation $w=(0,7,5,8,3,1,2,9,6,4)$, and the sets $C_i$ and $D_i$ are
given in Table \ref{tab:cidi}.  A useful way to visually display these
ideas is to write
the permutation $w$ from bottom to top to the right of $P$ so $w_i$
is adjacent to the vertical segment with endpoints $(n, i-1)$ and $(n, i)$; then the
labels at height $i$ are seen directly to the left of $w_i$, giving $C_{w_i}$.   See Figure
\ref{fig:bounce} (left).  
\newcolumntype{Y}{>{\centering\arraybackslash}X}
\newcolumntype{E}{>{\,\,} c <{\,\,}}
\begin{table}
	\centering
	\caption{The set $C_i$ and $D_i$ for the parking function in Figure
		\ref{fig:bounce}.  Here, $N = \{1, \dots, 9\}$.}
	\begin{tabular}{|E|E|E|E|E|E|E|E|E|E|E|}
		\hline
		$\textnormal{set} \backslash i$ & $4$ & $6$ & $9$ & $2$ & $1$ &
		$3$ & $8$ & $5$ & $7$ & $0$\\
		\hline
		$C_{i}$ & $\emptyset$ & $\emptyset$ & $4,6$ &  $\emptyset$ &
		$\emptyset$ & $9$ & $2$ & $\emptyset$ & $1,3,8$ & $5,7$\\
		$D_i$ & $\emptyset$ & $\emptyset$ & $4,6$ & $\emptyset$ &
		$\emptyset$ & $4,6,9$ & $2$ & $\emptyset$ & $N/\{5,7\}$ & 
		$N$\\
		\hline
	\end{tabular}
		\label{tab:cidi}
\end{table}

It is useful to regard  $w$ as labels on the vertical steps of the bounce path
$B$ by simply projecting them from the right of the path $P$ to
the vertical steps of $B$.  Define a
\emph{vertical run} of $B$ to be a set of vertical steps of at the same
$x$-coordinate.   Consider the set of labels $V$ on the vertical run
of $B$ with $x$-coordinate $v$.  Then it is straighforward to show:
\begin{itemize}
	\item $D_0 = [1,n]$.
	\item  The sets $\{D_i : i \in V\}$ are pairwise disjoint.
	\item  The set $\bigcup_{i \in V} D_i$ is the set of labels on horizontal steps to the
		right (at any height) of the vertical run
		at $x=v$; so, the total size of this union is $n-v$ (see
		Figure \ref{fig:bounce} (left)).
	\item Whence it follows from the definition in
		\eqref{eq:defbounce} that 
		\begin{equation}\label{eq:bouncedi}
		\bnce(p) = \sum_{i=0}^n |D_i|.
	\end{equation}
\end{itemize}
It further follows from the above properties that for all $i$, $i
\notin D_i$.

\begin{figure}[htbp]
	\centering
	\includegraphics[width=.9\textwidth]{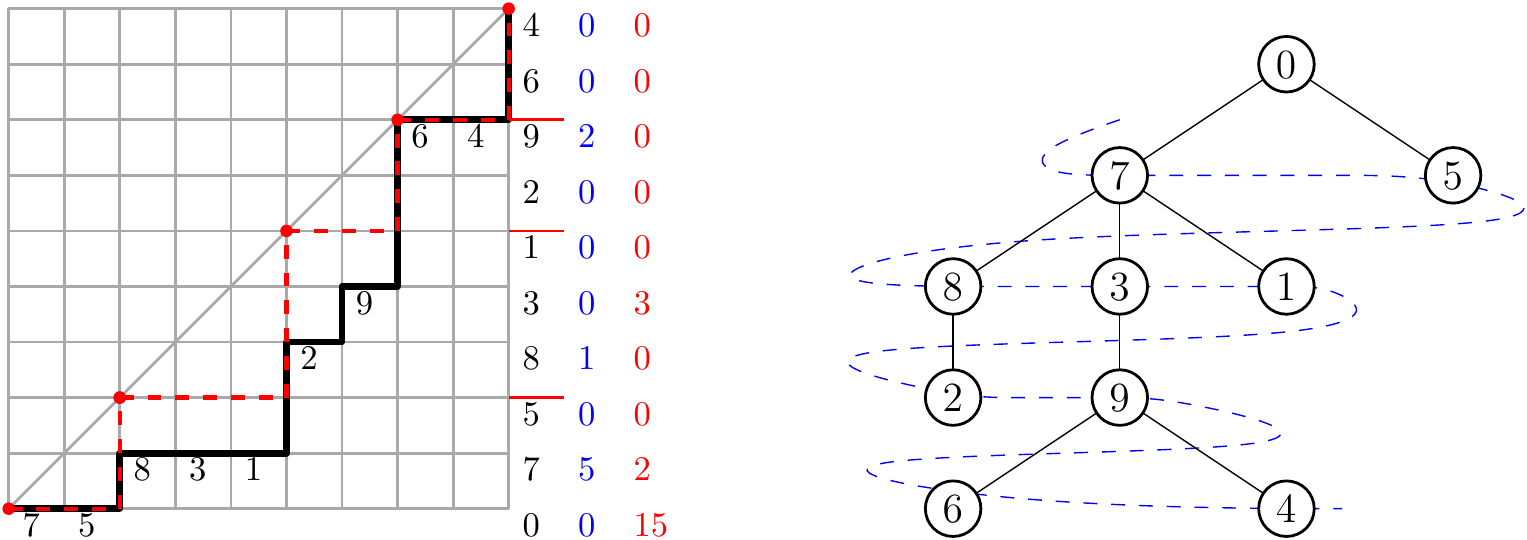}
\caption{Left:  The labelled Dyck path $P$ associated to the parking
	function $p=(1,3,1,7,0,7,0,1,4)$ is displayed, along with its
	bounce path $B$.  Here, the first column (black) from bottom to top is $w=w_0,w_1, \dots,
	w_n$, and the elements of $w$ in between red lines indicate they
	belong to the vertical run of $B$ to their left.  The second (blue) and third (red) columns give $\binv(i)$ and $\bninv(i)$
for the $i$ in the first column.  Right:  The image of $p$ under $\theta$.}
	\label{fig:bounce}
\end{figure}

We define two additional statistics on $p$.  For each $i \in [n]$,
let $\binv(i)$ and $\bninv(i)$ be the number of elements in $D_i$ less than $i$
and greater than $i$, respectively, and set
\begin{equation*}\label{eq:defpinv}
	\pinv(p) = \sum_{i=0}^n \binv(i) \,\,\,\,\, \textnormal{ and }\,\,\,\,\,
	\pninv(p) = \sum_{i=0}^n \bninv(i).
\end{equation*}
Figure \ref{fig:bounce} (left) illustrates the statistics $\binv$ and
$\bninv$.

With this setup, we define a function $\theta : \p_n \rightarrow \T_n$
by defining $C_i$ to be the children of vertex $i$.   See Figure
\ref{fig:bounce} (right).
It is easy to see that $\theta$ is a bijection.  Furthermore, if $p \in \p_n$ and
$T = \theta(p)$, we have:
\begin{itemize}
	\item By definition, for each $i \in [n]$ we have $\binv(i)+ \bninv(i) = |D_i|$.
	\item From \eqref{eq:bouncedi}, it follows that 
		\begin{equation}\label{eq:pinvbounce}
		\pinv(p) + \pninv(p) = \bnce(p).
		\end{equation}
	\item For each $i \in [n]$, the descendants of $i$ in
		$T$ are $D_i$;
	\item For each $i \in [n]$, the statistics $\binv(i)$ and
		$\bninv(i)$ count the number of pairs $(i,j)$ that are
		inversions and coinversions of $T$, respectively.  
	\item It then follows by definition that $\pinv(p)$ and $\pninv(p)$ are equal to
		$\inv(T)$ and $\ninv(T)$, respectively.
\end{itemize}

In light of the above, define for $n \geq 1$
\begin{equation*}
	B_n(q,t) = \sum_{p \in \p_n} q^{\pinv(p)} t^{\pninv(p)},
\end{equation*}
with $B_0(q,t) = 1$;  hence, $B_n(q,q) = \sum_{p \in \p_n} q^{\bnce(p)}$ from \eqref{eq:pinvbounce}.  Furthermore, we see from the discussion
concerning the map $\theta$ that $B_n(q,t) = I_n(q,t)$ for all $n \geq
0$.    The next theorem, which is a refinement of Theorem
\ref{thm:bounce}, now follows from Theorem \ref{thm:mainresult}.
\begin{theorem}
	For all $n \geq 0$, the polynomials $I_n(q, t), F_n(q,t)$ and $B_n(q, t)$ are all
	equal.
	\label{thm:bounceref}
\end{theorem}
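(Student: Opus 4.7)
The plan is to establish the three equalities $I_n(q,t)=F_n(q,t)=B_n(q,t)$ by combining Theorem~\ref{thm:mainresult}, which already yields the first equality, with a proof that the bijection $\theta:\p_n\to\T_n$ constructed just before the theorem transports $(\pinv,\pninv)$ on parking functions to $(\inv,\ninv)$ on trees. So the real work is to verify the properties of $\theta$ enumerated in the bullet points preceding the statement.

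First I would verify that $\theta$ is well-defined and bijective. The map assigns children of vertex $i$ to be $C_i$, so well-definedness amounts to checking that $\{C_i:i\in[n]\}$ partitions $[1,n]$ (each non-root label of $P$ appears in a unique row, hence in exactly one $C_i$) and that the resulting rooted structure at $0$ is connected without cycles. Connectedness and acyclicity follow by induction along the bounce path: the children of $0$ are the labels in the first vertical run; each of those labels then anchors its own subtree below, and $D_0=[1,n]$ from the third bullet ensures every label is reached exactly once. For injectivity/surjectivity, one recovers the path $P$ from $T\in\T_n$ by reading $w$ as the reverse-BFS order given by the bounce path: the vertical runs of $B$ are precisely the successive sibling groups starting from the children of $0$, and the heights at which labels appear in $P$ are dictated by $w$.

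Next I would prove the key structural claim: the descendants of $i$ in $\theta(p)$ are exactly $D_i$. This is immediate from the recursive definition $D_{w_i}=C_{w_i}\cup\bigcup_{j\in C_{w_i}}D_j$, together with an induction on depth starting from the leaves (those $i$ with $C_i=\emptyset$, where $D_i=\emptyset$ matches the empty descendant set). Once this is in hand, the identities
\[
\pinv(p)=\sum_{i\in[n]}\binv(i)=\sum_{i\in[n]}|\{j\in D_i:j<i\}|=\inv(\theta(p))
\]
and likewise $\pninv(p)=\ninv(\theta(p))$ follow directly from the definitions of $\binv,\bninv$ and of inversions/coinversions of a tree. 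Summing $q^{\pinv(p)}t^{\pninv(p)}$ over $p\in\p_n$ and pushing through $\theta$ then gives $B_n(q,t)=I_n(q,t)$, and combining with Theorem~\ref{thm:mainresult} completes the chain $I_n(q,t)=F_n(q,t)=B_n(q,t)$.

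The main obstacle is almost entirely in the first step: namely, confirming that $\theta$ is a bijection and, in particular, that the recursive descendant identity $D_i=\{\text{descendants of }i\text{ in }\theta(p)\}$ really holds. This rests on the geometric facts listed in the bullet points about vertical runs of the bounce path (disjointness of $\{D_i:i\in V\}$ and the equality $D_0=[1,n]$), both of which need careful induction on the bounce structure of $P$. Once those geometric facts are nailed down, the bistatistic identity and the rest of the theorem are purely bookkeeping.
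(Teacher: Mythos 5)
Your proposal is correct and follows essentially the same route as the paper: transport $(\pinv,\pninv)$ to $(\inv,\ninv)$ via the bijection $\theta$ (using the descendant identity $D_i=\{\text{descendants of }i\}$), conclude $B_n(q,t)=I_n(q,t)$, and then invoke Theorem~\ref{thm:mainresult} for the equality with $F_n(q,t)$. The paper treats the bijectivity of $\theta$ and the geometric facts about the bounce path as straightforward observations, whereas you spell out the inductions, but the substance is the same.
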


\bibliographystyle{amsalpha}
\bibliography{parking_factorizations}
\end{document}